\numberwithin{equation}{section}
\newtheorem{theorem}{Theorem}[section]
\newtheorem{lemma}[theorem]{Lemma}
\newtheorem{proposition}[theorem]{Proposition}
\newtheorem{corollary}[theorem]{Corollary}
\theoremstyle{definition}
\newtheorem{definition}[theorem]{Definition}
\newtheorem{example}[theorem]{Example}
\newtheorem{conjecture}[theorem]{Conjecture}
\newtheorem{problem}[theorem]{Problem}
\newcommand\Supp{\operatorname{Supp}}
\newcommand\Ass{\operatorname{Ass}}
\newcommand\Ann{\operatorname{Ann}}
\newcommand\Hom{\operatorname{Hom}}
\newcommand\RHom{\operatorname{R Hom}}
\newcommand\Ext{\operatorname{Ext}}
\newcommand\Rad{\operatorname{Rad}}
\newcommand\Ker{\operatorname{Ker}}
\newcommand{\yy}{\underline y}
\newcommand\cd{\operatorname{cd}}
\newcommand\height{\operatorname{height}}
\newcommand\grade{\operatorname{grade}}
\newcommand\ara{\operatorname{ara}}
\newcommand\Spec{\operatorname{Spec}}
\newcommand{\gam}{\Gamma_{I}}
\newcommand{\Rgam}{{\rm R} \Gamma_{\mathfrak m}}
\newcommand{\qism}{\stackrel{\sim}{\longrightarrow}}
\begin{document}
\author[M. Hellus \and P. Schenzel]{Michael Hellus \quad Peter Schenzel}
\title[Cohomologically complete intersections]{On cohomologically
complete intersections }

\address{Universit\"at Leipzig, Fakult\"at f\"ur Mathematik und Informatik,
D --- 04009 Leipzig, Germany}

\email{michael.hellus@math.uni-leipzig.de}

\address{Martin-Luther-Universit\"at Halle-Wittenberg,
Institut f\"ur Informatik, D --- 06 099 Halle (Saale),
Germany}

\email{peter.schenzel@informatik.uni-halle.de}

\subjclass[2000]{Primary:  13D45; Secondary:  14M10, 13C40}

\keywords{Local cohomology, complete intersections,
cohomological dimension}

\begin{abstract}
An ideal $I$ of a local Gorenstein ring $(R, \mathfrak m)$ is called
cohomologically complete intersection whenever $H^i_I(R) = 0$ for
all $i \not= \height I.$ Here $H^i_I(R), i \in \mathbb Z,$ denotes
the local cohomology of $R$ with respect to $I.$ For instance, a
set-theoretic complete intersection is a cohomologically complete
intersection. Here we study cohomologically complete intersections
from various homological points of view, in particular in terms of
their Bass numbers of $H^c_I(R), c = \height I.$ As a main result it
is shown that the vanishing $H^i_I(R) = 0$ for all $i \not= c$ is
completely encoded in homological properties of $H^c_I(R),$ in
particular in its Bass numbers.
\end{abstract}

\maketitle

\section*{Introduction}

Let $(R, \mathfrak m)$ denote a local Noetherian ring. For an ideal
$I \subset R$ it is a rather difficult question to determine the smallest
number $n \in \mathbb N$ of elements $a_1,\ldots, a_n \in R$ such that $\Rad I =
\Rad (a_1,\ldots, a_n)R.$ This number is called the arithmetic rank, $\ara I,$
of $I.$ By Krull's generalized principal ideal theorem it follows that
$\ara I \geq \height I.$ Of a particular interest is the case whenever $\ara I = \height I.$
In this situation $I$ is called a set-theoretic complete intersection.

For the ideal $I$ let $H^i_I(\cdot), i \in \mathbb Z,$ denote the local cohomology
functor with respect to $I,$ see \cite{aG} for its definition and basic results. The
cohomological dimension, $\cd I,$ defined by
\[
\cd I = \sup     \{ i \in \mathbb Z | H^i_I(R) \not= 0\}
\]
is another invariant related to the ideal $I.$ It is well known that $\height I \leq
\cd I \leq \ara I.$ In particular, if $I$ is set-theoretically a complete intersection
it follows that $\height I = \cd I,$ while the converse does not hold in general. Not
so much is known about ideals with the property of $\height I = \cd I.$ We call those
ideals cohomologically complete intersections. In this paper we start with
the investigations of cohomologically complete intersections, in particular when
$I$ is an ideal in a Gorenstein ring $(R, \mathfrak m).$

As an application of our main results there is a characterization of
cohomologically complete intersections for a certain class of ideals
(cf. Theorem \ref{0.1}). A generalization to arbitrary ideals in a
Gorenstein ring is shown in Section 3, namely: $I$ is
cohomologically a complete intersection if and only if a minimal
injective resolution of $H^c_I(R), c = \height I,$ "looks like that
of a Gorenstein ring" (see Theorem \ref{4.2} for the precise
statement).

\begin{theorem} \label{0.1} Let $I$ denote an ideal of a Gorenstein ring
$(R,\mathfrak m)$ with $c = \height I$ and $d = \dim R/I.$ Suppose that $I$
is a complete intersection in $V(I) \setminus \{\mathfrak m\}.$ Then the following conditions
are equivalent:
\begin{enumerate}
\item[(i)] $H^i_I(R) = 0$ for all $i \not= c,$ i.e. $I$ is cohomologically a
complete intersection.
\item[(ii)] $H^d_{\mathfrak m}(H^c_I(R)) \simeq E$ and $H^i_{\mathfrak
m}(H^c_I(R)) = 0,$ for all $i \not=d,$ where $E$ denotes the injective hull of
the residue field $R/\mathfrak m.$
\item[(iii)] The natural map $\Ext^d_R(k, H^c_I(R)) \to k$ is isomorphic and
$\Ext^i_R(k, H^c_I(R)) = 0$ for all $i \not= d.$
\item[(iv)] The Bass numbers of $H^c_I(R))$ satisfy
\[
 \dim_k \Ext^i_R(k, H^c_I(R)) = \delta_{d,i}.
\]
\end{enumerate}
Moreover, if $I$ satisfies the above conditions it follows that
${\hat R}^I \simeq \Hom_R(H^c_I(R),H^c_I(R))$ and $\Ext^i_R(H^c_I(R),H^c_I(R)) = 0$
for all $i \not= 0,$ where ${\hat R}^I$ denotes the $I$-adic completion of $R.$
\end{theorem}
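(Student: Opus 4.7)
My plan is to organize the argument around the complex $R\Gamma_I(R)$ in the derived category and compare it with the Gorenstein duality $\Rgam(R) \simeq E[-n]$, where $n = c+d = \dim R$ by the Cohen--Macaulayness of $R$. Writing $K = H^c_I(R)$ and using $\grade I = c$ (so $H^i_I(R)$ vanishes for $i < c$), the canonical truncation triangle
\[
K[-c] \longrightarrow R\Gamma_I(R) \longrightarrow D \longrightarrow K[-c+1]
\]
is the starting point. The hypothesis that $I$ is a complete intersection in $V(I)\setminus\{\mathfrak m\}$ translates to $H^i_I(R)_{\mathfrak p} = 0$ for every $\mathfrak p \neq \mathfrak m$ and every $i \neq c$, which forces each cohomology of $D$ to be $\mathfrak m$-torsion, hence $D \simeq \Rgam(D)$.

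The main step is the equivalence (i) $\Leftrightarrow$ (ii). Applying $\Rgam$ to the triangle, using $\Rgam \circ R\Gamma_I \simeq \Rgam$ together with local duality $\Rgam(R) \simeq E[-n]$, produces the triangle
\[
\Rgam(K)[-c] \longrightarrow E[-n] \longrightarrow D \longrightarrow \Rgam(K)[-c+1].
\]
Condition (i) is $D = 0$, equivalently the left arrow is a quasi-isomorphism, equivalently (ii) together with the specific triangle map realizing the isomorphism. The forward direction is immediate. For (ii) $\Rightarrow$ (i), I would read off the long exact cohomology sequence, obtaining $H^i_I(R) = 0$ for $c < i < n-1$ at once, and an exact sequence $0 \to H^{n-1}_I(R) \to E \xrightarrow{\phi} E \to H^n_I(R) \to 0$ at the top. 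Hartshorne--Lichtenbaum vanishing gives $H^n_I(R) = 0$ when $d \geq 1$ (the case $d = 0$ being trivial because then $I$ is $\mathfrak m$-primary), so $\phi$ is surjective; a Matlis-duality analysis of the induced action on the socle of $E$ then shows $\phi$ must be an isomorphism, yielding $H^{n-1}_I(R) = 0$. This is the main technical obstacle: the abstract isomorphism promised by (ii) has to be upgraded to the concrete triangle map, and only the interplay between Hartshorne--Lichtenbaum and the punctured-spectrum hypothesis allows this upgrade.

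The remaining equivalences are lighter. Since $k$ is $\mathfrak m$-torsion, $\RHom_R(k,K) \simeq \RHom_R(k,\Rgam(K))$; under (ii) the right side is $\RHom_R(k,E[-d]) \simeq k[-d]$, which delivers (iii), and (iii) $\Rightarrow$ (iv) is just a dimension count. For (iv) $\Rightarrow$ (ii), I would invoke that in a minimal injective resolution of $K$ the number of copies of $E = E_R(k)$ appearing in position $i$ equals the Bass number $\mu^i(\mathfrak m, K) = \dim_k \Ext^i_R(k,K)$, and that $\Gamma_\mathfrak m$ annihilates the other indecomposable injective summands while forcing the induced differentials to vanish by minimality; this yields $H^i_\mathfrak m(K) = E^{\mu^i(\mathfrak m,K)}$, which is (ii).

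For the concluding statement: under the equivalent conditions, $R\Gamma_I(R) \simeq K[-c]$, so Greenlees--May (Alonso--Jeremias--Lipman) duality gives
\[
\RHom_R(K,K) \simeq \RHom_R(R\Gamma_I(R), R\Gamma_I(R)) \simeq \hat R^I,
\]
from which $\Hom_R(K,K) \simeq \hat R^I$ and $\Ext^i_R(K,K) = 0$ for $i \neq 0$ follow at once.
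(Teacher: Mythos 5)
The high-level architecture of your proposal matches the paper's Section 2 and Lemmas 3.2--3.8: truncate $R\Gamma_I(R)$ at $K=H^c_I(R)[-c]$, apply $\operatorname{R}\Gamma_{\mathfrak m}$, use $\operatorname{R}\Gamma_{\mathfrak m}\circ\operatorname{R}\Gamma_I\simeq\operatorname{R}\Gamma_{\mathfrak m}$ and the Gorenstein fact $\operatorname{R}\Gamma_{\mathfrak m}(R)\simeq E[-n]$, and read off the long exact sequence; and for the final assertion the Greenlees--May route agrees in spirit with Lemma \ref{3.4}/\ref{3.8}. However, the step you identify as ``the main technical obstacle'' is exactly where your sketch fails, and this is precisely the point where the paper spends most of its effort (Theorem \ref{4.2}).

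\textbf{The gap.} You claim that Hartshorne--Lichtenbaum vanishing gives $H^n_I(R)=0$ whenever $d\ge 1$. This is false. HLVT says $H^n_I(R)=0$ iff $\dim \hat R/(I\hat R+\mathfrak p)>0$ for \emph{every} $\mathfrak p\in\operatorname{Assh}\hat R$; $d>0$ only guarantees this for \emph{some} such $\mathfrak p$. A concrete example is $R=k[[x,y,u,v]]/(xy)$, $I=(y,u,v)R$. Here $n=3$, $c=2$, $d=1$, $I$ is a complete intersection on $V(I)\setminus\{\mathfrak m\}$ (localizing inverts $x$, killing $y$, and $(u,v)$ is a two-generated height-two ideal in the localization), and yet $H^3_I(R)\ne 0$ because $\dim R/((x)+I)=0$ for the minimal prime $(x)$. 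So HLVT alone cannot do what you want it to do. Your subsequent ``Matlis-duality/socle analysis'' to force $\phi$ to be an isomorphism is also not an argument: if $\phi$ is surjective, its Matlis dual is multiplication by a non-zerodivisor $a\in\hat R$, and the socle of $\ker\phi=(0:_E a)$ is $k\ne 0$ whenever $a$ is a non-unit --- there is no contradiction to be had at the level of socles. The paper's Theorem \ref{4.2} handles exactly this: it splits $\operatorname{Assh}\hat R$ into the sets $U$ and $V$ according to whether $\dim\hat R/(\mathfrak p+I\hat R)$ is zero or positive, uses HLVT applied to $R/\mathfrak p$ for $\mathfrak p\in\operatorname{Ass}(\hat R/a\hat R)$ to bound $\dim R/(I+aR)$, plays the pieces against the inequality $d>1$, and separately treats $d=1$ by a spectral-sequence argument comparing $\operatorname{Ext}^*_R(k,H^*_I(R))$ with $\operatorname{Ext}^*_R(k,R)$. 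None of this is optional; your sketch implicitly assumes $\hat R$ has a single associated prime.

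\textbf{A smaller point.} For (iv) $\Rightarrow$ (ii) you assert that minimality forces the differentials of $\Gamma_{\mathfrak m}(J^\cdot)$ to vanish. That is not true in general (e.g.\ the minimal injective resolution $0\to E\xrightarrow{\,x^2\,}E\to 0$ of $k[[x]]/(x^2)$ over $k[[x]]$). Your conclusion survives here only because the hypothesis $\mu^i(\mathfrak m,K)=\delta_{d,i}$ leaves exactly one nonzero term, so there are no differentials to worry about; but the justification you wrote is wrong and would mislead in a more general situation. Finally, you should also note that the paper's Theorem \ref{0.1} is itself deduced by a one-line reduction to Theorems \ref{4.1} and \ref{4.2}, where the punctured-spectrum hypothesis is used precisely to discharge the local conditions at $\mathfrak p\ne\mathfrak m$; a self-contained proof along your lines must still reproduce the content of Theorem \ref{4.2}'s hard direction, which your HLVT shortcut does not.
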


It is a surprising fact -- at least to the authors -- that the
information for the equality $\height I = \cd I$ is completely
encoded in the cohomology module $H^c_I(R).$ Moreover the
characterization of cohomologically complete intersections looks in
a certain sense Gorenstein-like, cf. the well-known cohomological
characterization of Gorenstein rings in terms of Bass numbers and
Theorem \ref{0.1} (iv). The ``strong'' characterization (cf. Theorem
\ref{0.1} (iv)) looks very similar to the ``weak'' characterization
(cf. Theorem \ref{0.1} (iii)). In fact it requires much more effort
(it uses Matlis duals of local cohomology modules \cite{mH}). It is
related to a Conjecture (cf. \ref{3.7}) that the map $\Ext^d_R(k,
H^c_I(R)) \to k$ is in general non-zero.

\section{Preliminaries}
Let $(R, \mathfrak m, k)$ denote a local Gorenstein ring with $n = \dim R.$
In the following let $E = E_R(R/\mathfrak m)$ denote the injective hull of the
residue field $k = R/\mathfrak m$ as $R$-module.

A basic tool for the local cohomology with support in $\{\mathfrak m\}$ is the
local duality theorem (cf. \cite{aG}). To this end let $\hat R$ denote the
completion of $R$ and $\hat M$ the completion of $M.$

\begin{proposition} \label{2.1}
For a finitely generated $R$-module $M$ and an integer
$i \in \mathbb Z$ there are the following natural isomorphisms:
\begin{enumerate}
\item[(a)]
$H^i_{\mathfrak m}(M) \simeq \Hom_R(\Ext_R^{n-i}(M,R), E),$
\item[(b)]
$\Ext_{\hat R}^{n-i}(\hat{M}, \hat{R}) \simeq \Hom_R(H^i_{\mathfrak m}(M),E).$
\end{enumerate}
\end{proposition}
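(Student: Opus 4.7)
The plan is to prove (a) via a standard derived-category computation built on the Gorenstein identity $\Rgam(R) \qism E[-n]$, and then to deduce (b) by applying the Matlis duality functor $D := \Hom_R(-, E)$ to (a) and invoking Matlis reflexivity together with flat base change.

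For (a), I would first pick a system of parameters $\xx = x_1, \ldots, x_n$ of $R$. Since $R$ is Cohen-Macaulay, $\xx$ is a regular sequence and the \v{C}ech complex $\Cech$ is a bounded complex of flat $R$-modules that represents $\Rgam$; the Gorenstein hypothesis gives $H^i_{\mathfrak m}(R) = 0$ for $i \ne n$ and $H^n_{\mathfrak m}(R) \simeq E$, whence $\Rgam(R) \qism E[-n]$ in the derived category. Flatness of $\Cech$ then yields
\[
\Rgam(M) \qism M \lo \Rgam(R) \qism M \lo E[-n].
\]
The heart of the argument is the local duality isomorphism
\[
M \lo E \qism \RHom_R(\RHom_R(M, R), E),
\]
which I would verify by choosing a resolution $F_\bullet \to M$ by finitely generated free $R$-modules: the evaluation map $F_\bullet \otimes_R E \to \Hom_R(\Hom_R(F_\bullet, R), E)$ is tautologically an isomorphism of complexes (since $\Hom_R(R^k, -) = (-)^k$ termwise), and injectivity of $E$ lets $\Hom_R(-, E)$ compute $\RHom_R(-, E)$. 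Passing to cohomology in the appropriate degree, and using that $\Hom_R(-, E)$ commutes with cohomology, gives
\[
H^i_{\mathfrak m}(M) \simeq \Hom_R(\Ext^{n-i}_R(M, R), E),
\]
which is part (a).

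For (b), I would apply $D$ to the isomorphism of (a): $\Hom_R(H^i_{\mathfrak m}(M), E) \simeq D(D(N))$, where $N := \Ext^{n-i}_R(M, R)$ is a finitely generated $R$-module. Matlis reflexivity for Noetherian modules yields $D(D(N)) \simeq \hat N \simeq N \otimes_R \hat R$, and flat base change along $R \to \hat R$ identifies $N \otimes_R \hat R$ with $\Ext^{n-i}_{\hat R}(\hat M, \hat R)$; chaining these isomorphisms produces (b).

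The main obstacle in this otherwise classical argument is the derived-category identity $M \lo E \qism \RHom_R(\RHom_R(M, R), E)$: this is the one point where the finite generation of $M$ genuinely interacts with the injectivity of $E$, since one needs $\Hom_R(F_i, R)$ to be finitely generated (in fact free of the same rank as $F_i$) in order to reduce the verification to an obvious termwise identity. Everything afterwards is a formal consequence of the \v{C}ech representation of $\Rgam$, Matlis reflexivity, and flat base change.
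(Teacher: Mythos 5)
The paper itself gives no proof of Proposition~\ref{2.1}; it is stated as the local duality theorem over a Gorenstein ring, with a reference to Grothendieck's \emph{Local Cohomology}. So there is no ``paper's proof'' to compare against, and the task is to judge the argument on its own merits. Your proof is correct and is essentially the standard derived-category proof of local duality. The chain $\Rgam(M)\qism\Cech\otimes_R M\qism\Cech\lo M\qism\Rgam(R)\lo M\qism E[-n]\lo M$ is fine; one small superfluity is the appeal to Cohen--Macaulayness to get a regular sequence --- the \v{C}ech complex on \emph{any} generating set of an $\mathfrak m$-primary ideal already represents $\Rgam$, regular sequence or not. The key biduality $M\lo E\qism\RHom_R(\RHom_R(M,R),E)$ is correctly reduced to a termwise isomorphism on a resolution by finite free modules, exactly where finite generation of $M$ enters, and the index bookkeeping $H^i\bigl(E[-n]\lo M\bigr)\simeq\Hom_R\bigl(\Ext^{n-i}_R(M,R),E\bigr)$ gives (a). The derivation of (b) by applying $D=\Hom_R(-,E)$, then Matlis biduality $DD(N)\simeq\hat N\simeq N\otimes_R\hat R$ for Noetherian $N$, then flat base change $\Ext^{n-i}_R(M,R)\otimes_R\hat R\simeq\Ext^{n-i}_{\hat R}(\hat M,\hat R)$, is likewise correct. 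The one fact you take for granted, $\Rgam(R)\qism E[-n]$, is precisely the Gorenstein hypothesis in its local-cohomology form, so your proof is a clean reduction of local duality for $M$ to local duality for $R$, which is the expected shape of the argument and entirely appropriate here.
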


In particular, $H^i_{\mathfrak m}(M)$ is an Artinian $R$-module as a
consequence of Matlis duality. Moreover we have the following
result.

\begin{lemma} \label{2.2} Let $I$ denote an ideal of height $c$ in the Gorenstein
ring $(R, \mathfrak m)$ and $d = \dim R/I = n -c.$ Then the following is true:
\begin{itemize}
\item[(a)] For all $i, j \in \mathbb Z$ there are isomorphisms
\[
\varprojlim \Ext^i_R(\Ext^j_R(R/I^{\alpha}, R),R) \simeq
\Ext_R^i(H^j_I(R),R),
\]
where the inverse system on the left is induced by the
natural projections.
\item[(b)] For all $i, j \in \mathbb Z$ there are natural isomorphisms
\[
\Ext_{\hat R}^{n-i}(H^j_{I \hat R}(\hat R), \hat{R}) \simeq \Hom_R(H^i_{\mathfrak m}(H^j_I(R)),
E),
\]
where $\hat R$ denotes the completion of $R.$
\item[(c)] $H^i_{\mathfrak m}(H^j_I(R)) = 0$ for all integers  $i >
n - j$ and $\Ext^i_R(H^j_I(R),R) = 0$ for all $i <j.$
\end{itemize}
\end{lemma}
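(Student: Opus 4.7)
The plan is to establish (a) as the key structural identity, then derive (b) from (a) via local duality, and finally read off (c) from (a) and (b) using a standard grade estimate.

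For (a), set $M_\alpha := \Ext^j_R(R/I^\alpha, R)$, so that $H^j_I(R) = \varinjlim_\alpha M_\alpha$ is the usual presentation of local cohomology as a filtered colimit of $\Ext$'s. Fix an injective resolution $R \to E^\bullet$. Since $\Hom_R(-, E^p)$ converts filtered colimits into inverse limits, we have the equality of complexes $\Hom_R(H^j_I(R), E^\bullet) = \varprojlim_\alpha \Hom_R(M_\alpha, E^\bullet)$, whose $i$-th cohomology on the left is $\Ext^i_R(H^j_I(R), R)$. On the right, the standard Roos short exact sequence
\[
0 \to {\varprojlim}_\alpha^1 \Ext^{i-1}_R(M_\alpha, R) \to H^i\bigl(\varprojlim_\alpha \Hom_R(M_\alpha, E^\bullet)\bigr) \to \varprojlim_\alpha \Ext^i_R(M_\alpha, R) \to 0
\]
reduces the claim to verifying the Mittag-Leffler condition for the inverse systems $\{\Ext^{i-1}_R(M_\alpha, R)\}_\alpha$; the argument uses that each $M_\alpha$ is annihilated by $I^\alpha$ and that the transition maps have a controlled form induced from the canonical surjections $R/I^\beta \twoheadrightarrow R/I^\alpha$. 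This Mittag-Leffler verification is the main technical step of the whole lemma.

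For (b), first reduce to the case $R$ complete via flat base change along $R \to \hat R$, since local cohomology commutes with flat base change and both sides of (b) are then unchanged. With $R$ complete, apply local duality (Proposition~\ref{2.1}(a)) to each finitely generated module $M_\alpha$, obtaining
\[
\Ext^{n-i}_R(M_\alpha, R) \simeq \Hom_R\bigl(H^i_{\mathfrak m}(M_\alpha), E\bigr).
\]
Passing to the inverse limit, and using $\varprojlim \Hom_R(-, E) = \Hom_R(\varinjlim -, E)$ together with the fact that $H^i_{\mathfrak m}$ commutes with filtered direct limits, converts the right-hand side into $\Hom_R(H^i_{\mathfrak m}(H^j_I(R)), E)$. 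Combined with (a), this yields the desired identity.

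For (c), the vanishing $\Ext^i_R(H^j_I(R), R) = 0$ for $i < j$ is immediate from (a) and the classical grade estimate $\grade_R \Ext^j_R(N, R) \ge j$ (Ischebeck) for any finitely generated $R$-module $N$ over the Gorenstein ring $R$, applied to $N = R/I^\alpha$: this kills each term $\Ext^i_R(M_\alpha, R)$ for $i < j$ and hence the inverse limit. The dual bound $H^i_{\mathfrak m}(H^j_I(R)) = 0$ for $i > n - j$ then follows from (b): the module in question is $\mathfrak m$-torsion, so it vanishes iff its Matlis dual $\Ext^{n-i}_{\hat R}(H^j_{I\hat R}(\hat R), \hat R)$ does, and the latter vanishes for $n - i < j$ by the first half of (c) applied over the Gorenstein ring $\hat R$. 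The genuine obstacle throughout is the Mittag-Leffler check in (a); parts (b) and (c) are formal consequences of (a), local duality, and the standard grade estimate.
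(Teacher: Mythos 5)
Your treatment of (b) and (c) tracks the paper's closely. For (b) the paper applies local duality (Proposition \ref{2.1}) to each $M_\alpha := \Ext^j_R(R/I^\alpha,R)$, passes to the limit, and invokes (a); you do the same, with an extra (harmless) reduction to $R$ complete. For (c) the paper deduces $\grade \Ann \Ext^j_R(R/I^\alpha,R)\ge j$ from a dimension bound in \cite{pS2} and then applies \cite[Prop.\ 3.3]{aG}; you cite the equivalent Ischebeck/Auslander--Bridger grade estimate $\grade_R \Ext^j_R(N,R)\ge j$ directly. Same underlying fact, different attribution. Your derivation of the vanishing of $H^i_{\mathfrak m}(H^j_I(R))$ for $i>n-j$ from (b) and the first half of (c) via Matlis duality is exactly the paper's route.

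The substantive issue is (a), and here there is a gap. The paper disposes of (a) in one sentence by citing \cite{cW} for the claim $\Ext^i_R(\varinjlim M_\alpha, N)\simeq \varprojlim \Ext^i_R(M_\alpha,N)$ ``for any $R$-module $N$.'' You, more carefully, write down the Roos/Milnor sequence
\[
0 \to {\varprojlim}^1 \Ext^{i-1}_R(M_\alpha,R) \to \Ext^i_R\bigl(H^j_I(R),R\bigr) \to \varprojlim \Ext^i_R(M_\alpha,R) \to 0
\]
and correctly observe that (a) as an isomorphism requires the $\varprojlim^1$-term to vanish, which would follow from a Mittag--Leffler condition on the tower $\{\Ext^{i-1}_R(M_\alpha,R)\}_\alpha$. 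You then announce this ML check as ``the main technical step of the whole lemma'' and do not carry it out. The sentence about ``$M_\alpha$ annihilated by $I^\alpha$'' and ``controlled transition maps'' is not an argument: neither fact implies ML (towers of finitely generated modules with natural transition maps routinely fail ML --- e.g.\ $\{\mathfrak m^\alpha\}$ with inclusions), and nothing specific to the $\Ext^j(R/I^\alpha,R)$-system is invoked. So the proposal has a genuine hole precisely at the point it identifies as central. To be fair, the paper's own citation is equally silent on $\varprojlim^1$, so your proof is no worse; but having flagged the issue you need to resolve it, not gesture at it. One remark that would sharpen the exposition: part (c) survives regardless, since if $\Ext^i_R(M_\alpha,R)=0$ for all $\alpha$ and all $i<j$, then both $\varprojlim$ and $\varprojlim^1$ vanish in that range and the Roos sequence already gives the vanishing --- it is only the isomorphism statements in (a) and (b) that genuinely need the ML (or $\varprojlim^1=0$) input.
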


\begin{proof} First of all let us recall that the $\Ext$-functors in the first
variable transform a direct system into an inverse system such that
\[
\varprojlim \Ext^i_R(M_{\alpha}, N) \simeq
\Ext^i_R(\varinjlim  M_{\alpha}, N)
\]
for any $R$-module $N$ (cf. \cite{cW}). Now there is the isomorphism $H^j_I(R) \simeq \varinjlim
\Ext^j_R(R/I^{\alpha},R)$ (cf. \cite{aG}). Therefore the first statement is
shown to be true.

For the proof of the second statement first recall that
\[
H^i_{\mathfrak m}(H^j_I(R)) \simeq \varinjlim H^i_{\mathfrak m}(\Ext^j_R(R/I^{\alpha},R))
\]
since the local cohomology commutes with direct limits. Therefore the statement
of (b) is a consequence of the local duality theorem and the previous
observation on the inverse limit.

For the proof of the statement in (c) first recall that $c = \height
I = \height I\hat R.$ Therefore, because of Matlis duality  it will
be enough to prove that $\Ext^i_R(H^j_I(R),R) = 0$ for all integers
$i < j.$ It is a well-known fact that for any $\alpha \in \mathbb N$
we have $\Ext^j_R(R/I^{\alpha},R) = 0$ for all $j < c$ and $j > n.$
Moreover $\dim  \Ext^c_R(R/I^{\alpha},R) = n-d$ and $\dim
\Ext^j_R(R/I^{\alpha},R) \leq n - j$ for all $c < j \leq n$ (cf.
\cite[Proposition 2.3]{pS2}).

Let $\mathfrak b_j = \Ann \Ext^j_R(R/I^{\alpha},R).$ Therefore $\dim R/\mathfrak b_j
\leq n- j$ and $\grade \mathfrak b_j \geq j$ as easily seen.
Therefore $\Ext_R^i(N, R)$ vanishes for
all $R$-modules $N$ such that $\Supp_R N \subseteq V(\mathfrak b_j)$ and all $i < j$
(cf. \cite[Proposition 3.3]{aG}). Because of the statement in (a) this proves the vanishing of
$\Ext^i_R(H^j_I(R),R)$ for all $i< j$ and a given integer $j.$
\end{proof}

As a technical tool in the next section we need a proposition on the behavior of the section functor on certain complexes of $R$-modules.

\begin{proposition} \label{2.3} Let $I$ be an ideal of a Noetherian ring $R.$ Let $X$ denote
 an arbitrary $R$-module with $\Supp_R X \subset V(I).$ Then there is an isomorphism
\[
 \Hom_R(X, \Gamma_I(J^{\cdot})) \simeq \Hom_R(X, J^{\cdot})
\]
for any bounded complex $J^{\cdot}$ of injective $R$-modules.
\end{proposition}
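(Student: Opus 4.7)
The plan is to reduce the statement to a degree-by-degree assertion at the level of modules. The inclusion $\Gamma_I(J^{\cdot}) \hookrightarrow J^{\cdot}$ is a morphism of complexes, and applying $\Hom_R(X, -)$ termwise yields a morphism of $\Hom$-complexes. Since this morphism is an isomorphism of complexes exactly when each component is an isomorphism, the boundedness of $J^{\cdot}$ and the injectivity of its terms are immaterial; it suffices to prove the following purely module-theoretic claim: for any $R$-module $M$, the inclusion $\Gamma_I(M) \hookrightarrow M$ induces an isomorphism $\Hom_R(X, \Gamma_I(M)) \xrightarrow{\sim} \Hom_R(X, M)$.

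For the claim, injectivity is automatic because the functor $\Hom_R(X, -)$ is left exact and $\Gamma_I(M) \hookrightarrow M$ is injective. For surjectivity I must show that every $R$-linear map $f \colon X \to M$ factors through $\Gamma_I(M)$, i.e.\ that $f(X) \subseteq \Gamma_I(M)$. The key input is the observation that, since $R$ is Noetherian, the support condition on $X$ already forces $X$ itself to be $I$-torsion. Indeed, for any $x \in X$ the cyclic submodule $Rx$ satisfies $\Supp_R(Rx) = V(\Ann_R x) \subseteq \Supp_R X \subseteq V(I)$, so $I \subseteq \sqrt{\Ann_R x}$; because $I$ is finitely generated, this implies $I^n \subseteq \Ann_R x$ for some $n \geq 1$. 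Consequently $I^n f(x) = f(I^n x) = 0$, so $f(x) \in \Gamma_I(M)$, and surjectivity follows.

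There is no real obstacle here; the only subtlety is noting that the support hypothesis together with the Noetherian assumption upgrades to $X = \Gamma_I(X)$ element-wise, after which the conclusion is immediate. In fact the statement holds for an arbitrary (not necessarily bounded) complex of arbitrary $R$-modules; the restrictive form given is presumably what is convenient in the later applications.
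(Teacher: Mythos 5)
Your proof is correct, and it takes a genuinely different and more elementary route than the paper's. The paper reduces to finitely generated $X_\alpha$ (writing $X=\varinjlim X_\alpha$), invokes the commutation $\Hom_R(X_\alpha,\Gamma_I(J^{\cdot}))\simeq \Gamma_I(\Hom_R(X_\alpha,J^{\cdot}))$ from Hartshorne's \emph{Residues and Duality}, observes that the right-hand side equals $\Hom_R(X_\alpha,J^{\cdot})$ because its terms are supported in $V(I)$, and then passes to the inverse limit. You instead prove the sharper fact that the support hypothesis together with Noetherianity forces $X=\Gamma_I(X)$ element-by-element, after which the isomorphism $\Hom_R(X,\Gamma_I(M))\simeq\Hom_R(X,M)$ is immediate for \emph{any} $R$-module $M$ (it is just the statement that an $R$-linear image of an $I$-torsion element is $I$-torsion), and the complex version follows termwise. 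Your observation that the boundedness and injectivity hypotheses on $J^{\cdot}$ are inert is correct; they are presumably stated to match the later applications, not because the argument needs them. Your argument is self-contained and avoids both the external citation and the limit manipulation, which is a mild but real simplification. The only thing worth adding is that your reduction tacitly uses the standard identities $\Supp_R(Rx)=V(\Ann_R x)$ for cyclic modules and $V(\mathfrak a)\subseteq V(\mathfrak b)\Longleftrightarrow \mathfrak b\subseteq\sqrt{\mathfrak a}$, both of which you invoke correctly.
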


\begin{proof}
 Let $M$ denote a finitely generated $R$-module. Then there is an isomorphism of complexes
\[
 \Hom_R(M, \Gamma_I(J^{\cdot})) \simeq \Gamma_I(\Hom_R(M, J^{\cdot}))
\]
(cf. \cite{rH}). Now let $X$ be an arbitrary $R$-module with $\Supp_R X \subset V(I).$ Then $X\simeq \varinjlim X_{\alpha}$ for a certain direct family of finitely generated submodules $X_{\alpha} \subset X.$ Therefore $\Supp_R X_{\alpha} \subset V(I)$ and
\[
 \Hom_R(X, \Gamma_I(J^{\cdot})) \simeq \varprojlim \Hom_R(X_{\alpha}, \Gamma_I(J^{\cdot})).
\]
Because of the above result on finitely generated $R$-modules it
implies
\[
 \Hom_R(X_{\alpha},\Gamma_I(J^{\cdot})) \simeq \Hom_R(X_{\alpha},  J^{\cdot})
\]
because any module in $\Hom_R(X_{\alpha},  J^{\cdot})^i, i \in \mathbb Z,$ has its support in $V(I).$ By passing to the limit this proves the claim.
\end{proof}

As another technical tool we need a characterization of the
vanishing of the Bass numbers of an arbitrary $R$-module $X.$ Here
$(R, \mathfrak m)$ denotes a local ring with $k = R/\mathfrak m$ its
residue field. Recall that the Bass numbers of $X$ with respect to
$\mathfrak m$ are defined by
\[
 \dim_k \Ext^i_R(k, X),  \text{ for all }  i \in \mathbb Z.
\]

\begin{proposition} \label{2.4} Let $(R,\mathfrak m)$ denote a local ring. Let $X$
 denote an arbitrary $R$-module. For an integer $s \in \mathbb N$ the following conditions are equivalent:
\begin{itemize}
 \item[(i)] $H^i_{\mathfrak m}(X) = 0$ for all $i < s.$
 \item[(ii)] $\Ext^i_R(k, X) = 0$ for all $i < s.$
\end{itemize}
If one of these equivalent conditions is satisfied there is an isomorphism
\[
 \Ext^s_R(k, X) \simeq \Hom_R(k, H^s_{\mathfrak m}(X)).
\]
\end{proposition}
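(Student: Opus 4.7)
My plan is to deduce the proposition from the Grothendieck-type spectral sequence
\begin{equation*}
E_2^{p,q} = \Ext^p_R(k, H^q_{\mathfrak m}(X)) \Longrightarrow \Ext^{p+q}_R(k, X).
\end{equation*}
To produce it, I take an injective resolution $I^\bullet$ of $X$ and form $J^\bullet = \Gamma_{\mathfrak m}(I^\bullet)$; this is still a complex of injective $R$-modules (since $\Gamma_{\mathfrak m}$ preserves injectivity) and its cohomology is $H^q_{\mathfrak m}(X)$. The crucial point is that every $R$-linear map $k \to I^i$ is annihilated by $\mathfrak m$ and therefore factors through $\Gamma_{\mathfrak m}(I^i)$, so $\Hom_R(k, J^i) = \Hom_R(k, I^i)$ for each $i$; this is the degree-wise version of the support argument used in Proposition \ref{2.3}. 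Consequently the complex computing the abutment coincides with $\Hom_R(k, J^\bullet)$, and filtering by the canonical truncation of $J^\bullet$ produces the displayed spectral sequence.

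For (i) $\Rightarrow$ (ii), the hypothesis wipes out the rows $q < s$ of the $E_2$-page, so the abutment vanishes in total degree $< s$. On the diagonal $p+q = s$ only $E_\infty^{0,s}$ can survive: every outgoing differential $d_r\colon E_r^{0,s} \to E_r^{r, s-r+1}$ with $r \geq 2$ lands in a row already known to be zero, while incoming differentials start at $p = -r < 0$ and vanish by first-quadrant support. Hence $\Ext^s_R(k,X) \simeq E_2^{0,s} = \Hom_R(k, H^s_{\mathfrak m}(X))$, supplying both the vanishing half of (ii) and the isomorphism stated at the end. For (ii) $\Rightarrow$ (i), I argue by contradiction: pick $q < s$ minimal with $H^q_{\mathfrak m}(X) \neq 0$. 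A non-zero $\mathfrak m$-torsion module has non-zero socle, so $E_2^{0,q} = \Hom_R(k, H^q_{\mathfrak m}(X)) \neq 0$. Minimality of $q$ forces the rows $q' < q$ of the $E_2$-page to vanish, and the same bidegree analysis yields $E_\infty^{0,q} = E_2^{0,q} \neq 0$, forcing $\Ext^q_R(k,X) \neq 0$ with $q < s$, contradicting (ii).

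The one technical obstacle I anticipate is that $I^\bullet$ is a priori unbounded, whereas Proposition \ref{2.3} as stated is restricted to bounded complexes of injectives. I would circumvent this by noting that the identification $\Hom_R(k, \Gamma_{\mathfrak m}I^i) = \Hom_R(k, I^i)$ is pointwise in $i$ and requires no boundedness, and then invoking the standard spectral sequence of a filtered complex with respect to the canonical filtration on $J^\bullet$. Once convergence is in place, both implications are just inspection of the first column of a first-quadrant spectral sequence.
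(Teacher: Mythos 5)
Your argument is correct and rests on the same composite-functor spectral sequence $\Ext^p_R(k, H^q_{\mathfrak m}(X)) \Rightarrow \Ext^{p+q}_R(k, X)$ that the paper uses, together with the observation that a nonzero $\mathfrak m$-torsion module has nonzero socle. The only difference is organizational: the paper packages the equivalence as an induction on $s$ and invokes the spectral sequence once per inductive step, whereas you extract both the equivalence and the terminal isomorphism in a single pass from the first column of the $E_2$-page; the underlying mathematics is the same.
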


\begin{proof}
We proceed by an induction on $s.$ If $s=0,$ then the equivalence is trivially true.
Moreover, there is an isomorphism $\Hom_R(k, X) \simeq \Hom_R(k, H^0_{\mathfrak m}(X)),$ as easily seen.

So let $s > 0$ and suppose that the claim is true for $s - 1.$ Then there is an isomorphism
\[
 \Ext^{s-1}_R(k, X) \simeq \Hom_R(k, H^{s-1}_{\mathfrak m}(X)).
\]
Because of $\Supp_R H^{s-1}_{\mathfrak m}(X) \subseteq V(\mathfrak m)$ it follows that
$\Ext_R^{s-1}(k, X) = 0$ if and only if $H^{s-1}_{\mathfrak m}(X) = 0.$

In order to complete the inductive step we have to prove the isomorphism of the
statement. To this end consider the spectral sequence
\[
E^{p,q}_2 = \Ext_R^p(k, H^q_{\mathfrak m}(X)) \Longrightarrow E^{p+q}_{\infty} =
\Ext_R^{p+q}(k,X).
\]
Because of $H^i_{\mathfrak m}(X) = 0$ for all $i < s $ and
$\Ext^i_R(k, X) = 0$ for all $i < s$ it degenerates to the following isomorphisms
\[
\Hom_R(k,H^s_{\mathfrak m}(X))=E^{0,s}_2 \simeq E^{0,s}_\infty
\simeq E^s_\infty =\Ext_R^s(k,X).
\]
This completes the proof.
\end{proof}

In the core of the paper we are interested in cohomologically complete intersections $I$ of a
Gorenstein ring $(R, \mathfrak m).$ A technical necessary condition gives the following result.

\begin{proposition} \label{2.5} Suppose that $I$ denotes an ideal of a Gorenstein $(R,\mathfrak m)$
 such that $H^i_I(R) = 0$ for all $i \not= c, c = \height I.$ Then $R/I$ is unmixed, i.e. $c = \height IR_{\mathfrak p}$ for all minimal prime ideals $\mathfrak p \in V(I).$
\end{proposition}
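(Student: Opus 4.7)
The plan is to prove the contrapositive at each minimal prime: if $\mathfrak p \in V(I)$ is minimal over $I$, then localizing at $\mathfrak p$ should force a nonvanishing local cohomology module in degree $\height \mathfrak p$, and the only way this can be compatible with the hypothesis $H^i_I(R)=0$ for $i\ne c$ is to have $\height \mathfrak p = c$.

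More concretely, I would fix a minimal prime $\mathfrak p$ of $V(I)$ and set $h = \height \mathfrak p$. Since local cohomology commutes with localization, there is an isomorphism
\[
H^i_I(R)_{\mathfrak p} \simeq H^i_{IR_{\mathfrak p}}(R_{\mathfrak p})
\]
for every $i \in \mathbb Z$. Because $\mathfrak p$ is minimal over $I$, the ideal $IR_{\mathfrak p}$ is $\mathfrak pR_{\mathfrak p}$-primary, so $\Rad(IR_{\mathfrak p}) = \mathfrak pR_{\mathfrak p}$ and hence $H^i_{IR_{\mathfrak p}}(R_{\mathfrak p}) \simeq H^i_{\mathfrak pR_{\mathfrak p}}(R_{\mathfrak p})$.

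The localization $R_{\mathfrak p}$ is again a Gorenstein (in particular Cohen--Macaulay) local ring of dimension $h$. Consequently $H^h_{\mathfrak pR_{\mathfrak p}}(R_{\mathfrak p}) \ne 0$ (by local duality, or directly because the depth and dimension of the Cohen--Macaulay ring $R_{\mathfrak p}$ both equal $h$). Combining these observations yields $H^h_I(R)_{\mathfrak p} \ne 0$, so $H^h_I(R) \ne 0$.

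Now the hypothesis forces $h = c$, i.e.\ $\height \mathfrak p = c = \height IR_{\mathfrak p}$, which is exactly the unmixedness claim. I do not anticipate a genuine obstacle here: the argument is essentially a one-line localization, and the only subtlety is to remember that local cohomology commutes with localization and that $IR_\mathfrak p$ is $\mathfrak pR_\mathfrak p$-primary, so that the Cohen--Macaulayness of the Gorenstein ring $R_\mathfrak p$ can be invoked to produce nonvanishing cohomology in the top degree $h$.
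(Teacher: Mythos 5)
Your argument is essentially the same as the paper's: localize at a minimal prime $\mathfrak p$ of $I$, observe that $IR_{\mathfrak p}$ is $\mathfrak p R_{\mathfrak p}$-primary, and use that $R_{\mathfrak p}$ is Gorenstein (hence Cohen--Macaulay) to get nonvanishing of $H^{\height\mathfrak p}_{IR_{\mathfrak p}}(R_{\mathfrak p})$, forcing $\height\mathfrak p=c$. The paper phrases it as a proof by contradiction and leaves the Cohen--Macaulay nonvanishing step implicit, but the content is identical; your write-up just spells out that last step.
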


\begin{proof} Suppose there exists a prime ideal $\mathfrak p$ minimal in $V(I)$ such that
 $h = \height IR_{\mathfrak p} > c.$ Then $IR_{\mathfrak p}$ is $\mathfrak p R_{\mathfrak p}$-primary and therefore
\[
 H^h_I(R) \otimes_R R_{\mathfrak p} \simeq H^h_{I R_{\mathfrak p}}(R_{\mathfrak p}) \not= 0.
\]
But this means that $H^h_I(R) \not= 0, h > c,$ in contradiction to the assumption.
\end{proof}

As a further technical tool we shall prove a result for complexes which is well-known in the case
of a module.

\begin{proposition} \label{2.7} Let $R$ denote a Noetherian ring and $I \subset R$ an ideal. Let $X^{\cdot}$ denote a complex of $R$-modules such that $\Supp H^i(X^{\cdot}) \subseteq V(I)$ for all $i \in \mathbb Z.$ Then $H^i_I(X^{\cdot}) \simeq H^i(X^{\cdot})$ for all $i \in \mathbb Z.$
\end{proposition}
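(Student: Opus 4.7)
The plan is to invoke the second hyperhomology spectral sequence for the left exact functor $\Gamma_I$ applied to $X^{\cdot}$, namely
\[
E_2^{p,q} = H^p_I(H^q(X^{\cdot})) \Longrightarrow H^{p+q}_I(X^{\cdot}).
\]
This arises from a Cartan--Eilenberg resolution (or, more generally, a K-injective replacement) of $X^{\cdot}$ by injective $R$-modules, by filtering the associated double complex by columns. Given the derived-functor framework in which the paper is working, this spectral sequence converges strongly to the hypercohomology on the right-hand side.

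The next step is to use the support hypothesis. Since $\Supp_R H^q(X^{\cdot}) \subseteq V(I)$, each cohomology module $H^q(X^{\cdot})$ is $I$-torsion, so $\Gamma_I(H^q(X^{\cdot})) = H^q(X^{\cdot})$. Moreover $H^p_I(M) = 0$ for $p > 0$ whenever $M$ is $I$-torsion, which is the standard module-level fact: over a Noetherian ring the injective envelope of an $I$-torsion module is itself $I$-torsion, so a minimal injective resolution of such an $M$ consists of $I$-torsion injectives and is therefore left fixed by $\Gamma_I$. Consequently
\[
E_2^{p,q} = \begin{cases} H^q(X^{\cdot}), & p = 0, \\ 0, & p > 0. \end{cases}
\]

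The spectral sequence therefore already degenerates at the $E_2$-page, being concentrated on the single column $p=0$. Reading off the edge isomorphism yields $H^i_I(X^{\cdot}) \simeq H^i(X^{\cdot})$ for every $i \in \mathbb{Z}$, as required.

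The main obstacle I anticipate is bookkeeping: the statement does not restrict the boundedness of $X^{\cdot}$, so one must either invoke a K-injective resolution to guarantee existence and convergence of the hyperhomology spectral sequence in full generality, or else observe that the proposition will only be applied to bounded complexes in the sequel. Once that point is settled, the argument reduces to the well-known module-level statement and the collapse of the spectral sequence is essentially immediate.
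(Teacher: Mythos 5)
Your proof is essentially the paper's: the authors invoke the same spectral sequence $E_2^{p,q} = H^p_I(H^q(X^{\cdot})) \Rightarrow H^{p+q}_I(X^{\cdot})$, observe that the support hypothesis kills all columns with $p \neq 0$, and read off the resulting degeneration. You have simply spelled out the module-level reason for the vanishing (injective envelopes of $I$-torsion modules are $I$-torsion) and flagged the boundedness/K-injective issue, which the paper leaves implicit.
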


\begin{proof}
In order to compute the hypercohomology of the complex ${\rm R}\Gamma_I(X^{\cdot})$ there is the following spectral sequence
\[
 E_2^{p,q} = H^p_I(H^q(X^{\cdot})) \Longrightarrow E^{p+q}_{\infty} =
H^{p+q}_I(X^{\cdot})).
\]
By the assumption $\Supp H^i(X^{\cdot}) \subseteq V(I)$ so that $E_2^{p,q} = 0$ for all $p \not= 0.$
This implies a partial degeneration of the spectral sequence to the isomorphisms
\[
 H^i_I(X^{\cdot}) \simeq H^0_I(H^i(X^{\cdot})) \simeq H^i(X^{\cdot})
\]
for all $i \in \mathbb Z.$
\end{proof}

As another auxiliary tool in the following we need the so-called
Hartshorne-Lichtenbaum vanishing theorem. We give it here for a
slight sharpened form as we shall need it. A proof can e.~g. be
found in \cite[8.1.9 and 8.2.1]{mH}.

\begin{proposition} \label{2.8} Let $(R,\mathfrak m)$ denote a local Gorenstein ring with $n = \dim R.$ Let $I$ be an ideal of $R.$ Let $M$ be a finitely generated $R$-module. Then
 \[
  \Ass_{\hat{R}} \Hom_R(H^n_I(M), E) = \{ \mathfrak p \in \Ass_{\hat{R}} \hat{M}  | \dim \hat{R}/\mathfrak p = n \text{ and } \dim \hat{R}/(\mathfrak p + I\hat{R}) = 0\}.
 \]
As an epimorphic image of $H^n_{\mathfrak m}(M)$, $H^n_I(M)$ is an
Artinian $R$-module. Here $\hat{M}$ denotes the completion of $M.$
\end{proposition}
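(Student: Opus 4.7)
The plan is to combine a Grothendieck composition-of-functors spectral sequence with the classical Hartshorne--Lichtenbaum vanishing theorem for complete local domains, which I will treat as a black box (as suggested by the citation to \cite{mH}). First I would reduce to the case $R = \hat R$ complete, since $H^n_I(M)$ and $H^n_{I\hat R}(\hat M)$ agree as abelian groups and the assertion concerns $\Ass_{\hat R}.$

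To establish Artinianness and a key surjection $H^n_{\mathfrak m}(M) \twoheadrightarrow H^n_I(M),$ I would employ the spectral sequence $E_2^{p,q} = H^p_{\mathfrak m}(H^q_I(M)) \Rightarrow H^{p+q}_{\mathfrak m}(M)$ arising from $\Gamma_{\mathfrak m} \circ \Gamma_I = \Gamma_{\mathfrak m}.$ The crucial input is the estimate $\dim H^q_I(M) \le n - q$ for $q \le n,$ which follows because $H^q_{I R_{\mathfrak p}}(M_{\mathfrak p}) \ne 0$ forces $\height \mathfrak p \ge q,$ and $R$ is equidimensional. Taking $q = n-r+1$ yields $H^r_{\mathfrak m}(H^{n-r+1}_I(M)) = 0$ for every $r \ge 1,$ so all outgoing differentials from $E_r^{0,n}$ are zero; hence $E_\infty^{0,n} = E_2^{0,n} = H^n_I(M),$ and the edge map of the spectral sequence is the desired surjection $H^n_{\mathfrak m}(M) \twoheadrightarrow H^n_I(M).$ Artinianness of $H^n_I(M)$ then follows from Artinianness of $H^n_{\mathfrak m}(M),$ a consequence of Matlis duality.

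For the associated primes, I would dualize the surjection to obtain an injection $\Hom_R(H^n_I(M), E) \hookrightarrow \Hom_R(H^n_{\mathfrak m}(M), E) \simeq \Hom_R(M, R)$ via Proposition \ref{2.1}(b). The associated primes of $\Hom_R(M, R)$ compute to $\Ass_R R \cap \Supp_R M = \{\mathfrak p \in \Ass_R M : \dim R/\mathfrak p = n\}$ (the equality uses that $R$ is Gorenstein, hence equidimensional), which yields the inclusion ``$\subseteq$.'' For the reverse inclusion, a Matlis-adjunction argument gives $\mathfrak p \in \Ass_R \Hom_R(H^n_I(M), E)$ if and only if $H^n_I(M)/\mathfrak p H^n_I(M) \ne 0.$ Right exactness of $H^n_I(-),$ a consequence of the vanishing $H^{n+1}_I = 0,$ then supplies a natural isomorphism $H^n_I(M)/\mathfrak p H^n_I(M) \simeq H^n_I(M/\mathfrak p M).$ A Nakayama argument on $M_{\mathfrak p}$ (nonzero since $\mathfrak p \in \Ass_R M$) shows $\Ann_R(M/\mathfrak p M) = \mathfrak p,$ so that $M/\mathfrak p M$ is a finitely generated module of dimension $n$ over the complete local domain $R/\mathfrak p.$ The classical Hartshorne--Lichtenbaum vanishing applied over $R/\mathfrak p$ then equates $H^n_I(M/\mathfrak p M) \ne 0$ with the condition $\dim R/(\mathfrak p + I\hat R) = 0,$ finishing the proof.

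The main obstacle is the spectral-sequence step establishing $E_\infty^{0,n} = H^n_I(M),$ which hinges on the systematic dimension estimate $\dim H^{n-r+1}_I(M) \le r - 1.$ The classical Hartshorne--Lichtenbaum statement for complete local domains is the deeper ingredient, but is used here as a black box.
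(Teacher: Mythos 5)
The paper does not actually prove Proposition~\ref{2.8}; it is stated as an auxiliary tool and the proof is deferred to \cite[8.1.9 and 8.2.1]{mH}. So there is no in-paper argument to compare with, and your proposal must be judged on its own merits.

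On those terms the outline is sound and, I believe, essentially correct. The reduction to $R$ complete, the composite-functor spectral sequence $E_2^{p,q}=H^p_{\mathfrak m}(H^q_I(M))\Rightarrow H^{p+q}_{\mathfrak m}(M)$, the dimension estimate $\dim H^q_I(M)\le n-q$ coming from $\cd(IR_{\mathfrak p},R_{\mathfrak p})\le\dim R_{\mathfrak p}$ together with equidimensionality of the Gorenstein ring, the resulting degeneration $E_\infty^{0,n}=E_2^{0,n}=H^n_I(M)$ and the edge surjection $H^n_{\mathfrak m}(M)\twoheadrightarrow H^n_I(M)$, the computation $\Ass_R\Hom_R(M,R)=\Ass R\cap\Supp M$, the right-exactness of $H^n_I$ giving $H^n_I(M)/\mathfrak pH^n_I(M)\simeq H^n_I(M/\mathfrak pM)$, and the final appeal to classical Hartshorne--Lichtenbaum over the complete domain $R/\mathfrak p$ (via a surjection $(R/\mathfrak p)^r\twoheadrightarrow M/\mathfrak pM$) all hold up.

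The one place where the write-up is too quick is the asserted equivalence ``$\mathfrak p\in\Ass_R\Hom_R(H^n_I(M),E)$ if and only if $H^n_I(M)/\mathfrak pH^n_I(M)\neq0$.'' Matlis adjunction gives $\Hom_R(R/\mathfrak p,D(N))\simeq D(N/\mathfrak pN)$, so $N/\mathfrak pN\neq0$ is equivalent only to $\Hom_R(R/\mathfrak p,D(N))\neq0$, i.e.\ to $\mathfrak p$ being contained in some associated prime of $D(N)$, not to $\mathfrak p$ itself being associated. (Take $R=k[[x,y]]$, $D(N)=R/(x)\oplus R/(x,y)$, $\mathfrak p=(y)$: then $N/\mathfrak pN\neq0$ but $(y)\notin\Ass D(N)$.) In your application this is harmless, because by the first inclusion $\Ass D(H^n_I(M))\subseteq\Ass R$ consists of minimal primes of $R$, and the primes $\mathfrak p$ you plug in also have $\dim R/\mathfrak p=n$; for a minimal prime of $R$ the containment forces equality, so the ``iff'' does hold there. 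You should, however, state the equivalence only for such $\mathfrak p$ (or pass through $\Supp D(H^n_I(M))$ and use that minimal primes of the support are associated), rather than asserting it for arbitrary $\mathfrak p$.
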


\section{The truncation complex}
In this section let $(R, \mathfrak m)$ denote a local Gorenstein ring and $d = \dim R.$ Let $R \qism E^{\cdot}$ denote a minimal injective resolution of $R$ as an $R$-module. It is a well-known fact that
\[
E^i \simeq \oplus_{\mathfrak p \in \Spec R, \height \mathfrak p = i} E_R(R/\mathfrak p),
\]
where $E_R(R/\mathfrak p)$ denotes the injective hull of $R/\mathfrak p.$  See \cite{hB} for  these and related results on Gorenstein rings.

Now let $I \subset R$ denote an ideal and $c = \height I.$ Then $d = \dim R/I = n - c.$ The local cohomology modules $H^i_I(R), i \in \mathbb Z,$ are -- by definition -- the cohomology modules of the complex $\gam (E^{\cdot}).$ Because of $\gam(E_R(R/\mathfrak p)) = 0$ for all
$\mathfrak p \not\in V(I)$ it follows that $\gam(E^{\cdot})^i = 0$ for all $i < c.$

Therefore $H^c_I(R) = \Ker(\gam(E^{\cdot})^c \to \gam(E^{\cdot})^{c+1}).$ This observation provides an embedding $H^c_I(R)[-c] \to \gam(E^{\cdot})$ of complexes of $R$-modules.

\begin{definition} \label{3.1} The cokernel of the embedding  $H^c_I(R)[-c] \to \gam (E^{\cdot})$ is defined as $C^{\cdot}_R(I),$ the truncation complex. So there is a short exact sequence of complexes of $R$-modules
\[
0 \to H^c_I(R)[-c] \to \gam (E^{\cdot}) \to C^{\cdot}_R(I) \to 0.
\]
In particular it follows that $H^i(C^{\cdot}_R(I)) = 0$ for $i \leq c$ or $i > d$ and $H^i(C^{\cdot}_R(I)) \simeq H^i_I(R)$ for $c < i \leq d.$
\end{definition}

The advantage of the truncation complex is that it separates information of the local cohomology modules $H^i_I(R), i = c,$ from those with $i \not= c.$ A first result in this direction is the following lemma.

\begin{lemma} \label{3.2} With the previous notation there are an exact sequence
 \[
0 \to H^{n-1}_{\mathfrak m}(C^{\cdot}_R(I)) \to H^d_{\mathfrak m}(H^c_I(R)) \to E \to H^n_{\mathfrak m}(C^{\cdot}_R(I)) \to 0,
 \]
isomorphisms $H^{i-c}_{\mathfrak m}(H^c_I(R)) \simeq H^{i-1}_{\mathfrak m}(C^{\cdot}_R(I))$ for $i < n$ and the vanishing $H^{i-c}_{\mathfrak m}(H^c_I(R)) = 0$ for $i > n.$
\end{lemma}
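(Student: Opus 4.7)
The plan is to apply the hyper-derived functor $\Rgam$ to the defining short exact sequence
\[
0 \to H^c_I(R)[-c] \to \gam(E^{\cdot}) \to C^{\cdot}_R(I) \to 0
\]
and to read off all three assertions from the resulting long exact cohomology sequence. The left-hand term contributes $H^i_{\mathfrak m}(H^c_I(R)[-c]) \simeq H^{i-c}_{\mathfrak m}(H^c_I(R))$, while the right-hand term is left as the unknown $H^i_{\mathfrak m}(C^{\cdot}_R(I))$.

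The crucial step is to evaluate the middle term. The functor $\gam$ preserves injective $R$-modules, so $\gam(E^{\cdot})$ is itself a bounded complex of injective $R$-modules and $\Rgam$ may be computed on it by applying $\Gamma_{\mathfrak m}$ termwise. Since $I \subseteq \mathfrak m$, every $\mathfrak m$-torsion element is also $I$-torsion, whence $\Gamma_{\mathfrak m}\circ\gam = \Gamma_{\mathfrak m}$. Consequently
\[
H^i_{\mathfrak m}(\gam(E^{\cdot})) \simeq H^i(\Gamma_{\mathfrak m}(E^{\cdot})) = H^i_{\mathfrak m}(R),
\]
which, by the Gorenstein hypothesis with $n = \dim R$, equals $E$ for $i = n$ and vanishes otherwise.

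Substituting these values into the long exact sequence yields, for every $i$ with $i \neq n, n-1$, an isomorphism $H^{i}_{\mathfrak m}(C^{\cdot}_R(I)) \simeq H^{i+1-c}_{\mathfrak m}(H^c_I(R))$; reindexing produces the stated isomorphism $H^{i-1}_{\mathfrak m}(C^{\cdot}_R(I)) \simeq H^{i-c}_{\mathfrak m}(H^c_I(R))$ in the range $i < n$. Around the exceptional degree the copy of $E$ inserts itself between $H^{n-c}_{\mathfrak m}(H^c_I(R)) = H^d_{\mathfrak m}(H^c_I(R))$ and $H^n_{\mathfrak m}(C^{\cdot}_R(I))$, yielding the four-term piece of the statement provided that the trailing term $H^{n+1-c}_{\mathfrak m}(H^c_I(R)) = H^{d+1}_{\mathfrak m}(H^c_I(R))$ vanishes.

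This vanishing -- which is at the same time the third assertion $H^{i-c}_{\mathfrak m}(H^c_I(R)) = 0$ for $i > n$ -- is immediate from Grothendieck's vanishing theorem: the module $H^c_I(R)$ has support contained in $V(I)$ and $\dim R/I = d$, so $H^j_{\mathfrak m}(H^c_I(R)) = 0$ for all $j > d$. The only step requiring genuine care is the evaluation of the middle term described above; everything else is formal bookkeeping in the long exact sequence.
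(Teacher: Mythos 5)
Your proof is correct and takes essentially the same route as the paper's: apply $\Rgam$ to the truncation short exact sequence, identify the middle term as $E[-n]$ via the identity $\Gamma_{\mathfrak m}\circ\Gamma_{I}=\Gamma_{\mathfrak m}$ together with the Gorenstein structure of $E^{\cdot}$, and unwind the resulting long exact cohomology sequence. The only cosmetic difference is that you invoke Grothendieck's vanishing theorem directly for $H^{j}_{\mathfrak m}(H^c_I(R))=0$ when $j>d$, whereas the paper refers back to Lemma \ref{2.2}(c) which obtains this via Matlis duality; both are valid.
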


\begin{proof}
Take the short exact sequence of the truncation complex (cf. \ref{3.1}) and apply the derived functor $\Rgam (\cdot).$ In the derived category this provides a short exact sequence of complexes
\[
0 \to \Rgam (H^c_I(R))[-c] \to \Rgam (\gam(E^{\cdot})) \to \Rgam (C^{\cdot}_R(I)) \to 0.
\]
Since $\gam(E^{\cdot})$ is a complex of injective $R$-modules we might use $\Gamma_{\mathfrak m}(\Gamma_I(E^{\cdot}))$ as a representative of $\Rgam (\gam(E^{\cdot})).$ But now there is an equality for the composite of section functors $\Gamma_{\mathfrak m}(\Gamma_I(\cdot)) = \Gamma_{\mathfrak m}(\cdot).$ Therefore $\Gamma_{\mathfrak m}(E^{\cdot})$ is a representative of $\Rgam (\gam(E^{\cdot}))$ in the derived category. But now $\Gamma_{\mathfrak m}(E_R(R/\mathfrak p)) = 0$ for any prime ideal $\mathfrak p \not= \mathfrak m$ while $\Gamma_{\mathfrak m}(E) \simeq E.$ So there is an isomorphism of complexes $\Gamma_{\mathfrak m}(E^{\cdot}) \simeq E[-n].$

With these observations in mind the above short exact sequence induces the exact sequence of the statement and the isomorphisms
\[
H^{i-c}_{\mathfrak m}(H^c_I(R)) \simeq H^{i-1}_{\mathfrak m}(C^{\cdot}_R(I)), \quad i < n
\]
by view of the corresponding long exact cohomology sequence. Moreover the vanishing of
$H^i_{\mathfrak m}(H^c_I(R))$ for all $i > d$ is shown above (cf. \ref{2.2}).
\end{proof}

As a consequence there is the following necessary condition for an
ideal $I \subset R$ to be a cohomologically complete intersection.
As we shall see later this is not sufficient (cf. \ref{5.1}).

\begin{corollary} \label{3.3} Let $I\subset R$ be an ideal with $\height I = c.$ Suppose that
$H^i_I(R) = 0$ for all $i \not= c.$ Then $H^d_{\mathfrak m}(H^c_I(R)) \simeq E$ and
$H^i_{\mathfrak m}(H^c_I(R)) = 0$ for all $i \not= c.$
\end{corollary}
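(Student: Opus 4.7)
The plan is to apply Lemma \ref{3.2} directly once we observe that the hypothesis forces the truncation complex to be acyclic. By Definition \ref{3.1} the cohomology of $C^{\cdot}_R(I)$ vanishes in degrees $\leq c$ and $>d$, while in the range $c<i\leq d$ we have $H^i(C^{\cdot}_R(I))\simeq H^i_I(R)$. Under the standing assumption $H^i_I(R)=0$ for all $i\neq c$, this gives $H^i(C^{\cdot}_R(I))=0$ for every $i\in\mathbb Z$, i.e., $C^{\cdot}_R(I)$ is exact.

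Since $C^{\cdot}_R(I)$ is acyclic in the derived category, $\Rgam(C^{\cdot}_R(I))=0$, so $H^i_{\mathfrak m}(C^{\cdot}_R(I))=0$ for all $i$. Plugging this into the four-term exact sequence supplied by Lemma \ref{3.2},
\[
0\to H^{n-1}_{\mathfrak m}(C^{\cdot}_R(I))\to H^d_{\mathfrak m}(H^c_I(R))\to E\to H^n_{\mathfrak m}(C^{\cdot}_R(I))\to 0,
\]
the outer terms vanish and we read off the isomorphism $H^d_{\mathfrak m}(H^c_I(R))\simeq E$.

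The isomorphisms $H^{i-c}_{\mathfrak m}(H^c_I(R))\simeq H^{i-1}_{\mathfrak m}(C^{\cdot}_R(I))$ for $i<n$ from the same lemma then give $H^j_{\mathfrak m}(H^c_I(R))=0$ for every $j<d$ (take $j=i-c$ with $i<n$, i.e., $j<n-c=d$), and the lemma already supplies $H^j_{\mathfrak m}(H^c_I(R))=0$ for $j>d$. Combined, $H^i_{\mathfrak m}(H^c_I(R))=0$ for all $i\neq d$, completing the argument. There is no real obstacle here; the whole content of the corollary is packaged inside Lemma \ref{3.2}, and the only point to check is the acyclicity of the truncation complex under the hypothesis, which is immediate from its construction.
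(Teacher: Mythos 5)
Your proof is correct and follows the same route as the paper: both establish that the hypothesis makes the truncation complex $C^{\cdot}_R(I)$ acyclic, deduce $H^i_{\mathfrak m}(C^{\cdot}_R(I))=0$ for all $i$ (the paper via the hypercohomology spectral sequence, you by noting directly that an acyclic complex has trivial $\Rgam$), and then read off the conclusion from the exact sequence and isomorphisms of Lemma~\ref{3.2}. Note that you correctly state the vanishing as $i\neq d$, which is what the argument proves; the statement of the corollary as printed has a typo saying $i\neq c$.
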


\begin{proof} By the assumption we have the vanishing of $H^i_I(R)$ for all $i \not= c.$
Therefore the truncation complex $C^{\cdot}_R(I)$ is bounded and homologically trivial.
In order to compute the hypercohomology $H^i_{\mathfrak m}(C^{\cdot}_R(I))$ consider the
following spectral sequence
\[
E_2^{p,q} = H^p_{\mathfrak m}(H^q(C^{\cdot}_R(I))) \Longrightarrow E^{p+q}_{\infty} =
H^{p+q}_{\mathfrak m}(C^{\cdot}_R(I)).
\]
Because all the initial terms vanish and because of the finiteness of the spectral sequence
$H^i_{\mathfrak m}(C^{\cdot}_R(I)) = 0$ for all $i \in \mathbb Z.$ So the claim is true by
Lemma \ref{3.2}.
\end{proof}

Let us continue with a result -- in a certain sense -- dual to the statement of Lemma \ref{3.2}. Here let
${\hat R}^I$ denote the $I$-adic completion of $R.$

\begin{lemma} \label{3.4} With the above notation the following is true:
 \begin{enumerate}
 \item[(a)] There is a short exact sequence
\[
0 \to \Ext^0_R(C^{\cdot}_R(I),R) \to {\hat R}^I \to \Ext^c_R(H^c_I(R),R) \to \Ext^1_R(C^{\cdot}_R(I),R) \to 0.
\]
\item[(b)] There are isomorphisms $\Ext^{i+c}_R(H^c_I(R),R) \simeq \Ext^{i+1}_R(C^{\cdot}_R(I),R)$ for $i > 0.$
\item[(c)] We have the vanishing $\Ext^i_R(C^{\cdot}_R(I),R) = 0$ for $i < 0.$
\end{enumerate}
\end{lemma}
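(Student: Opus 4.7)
The plan is to derive all three statements from the long exact sequence of $\Ext$ modules obtained by applying the contravariant derived functor $\RHom_R(-, R)$ to the short exact sequence
\[
0 \to H^c_I(R)[-c] \to \gam(E^{\cdot}) \to C^{\cdot}_R(I) \to 0
\]
defining the truncation complex. Viewing this as a distinguished triangle in the derived category and using the shift identity $H^i\RHom_R(H^c_I(R)[-c], R) = \Ext^{i+c}_R(H^c_I(R), R)$, the associated long exact sequence takes the form
\[
\cdots \to \Ext^{i+c-1}_R(H^c_I(R), R) \to \Ext^i_R(C^{\cdot}_R(I), R) \to H^i\RHom_R(\gam(E^{\cdot}), R) \to \Ext^{i+c}_R(H^c_I(R), R) \to \Ext^{i+1}_R(C^{\cdot}_R(I), R) \to \cdots.
\]

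The key input, and the main technical obstacle, is the identification $\RHom_R(\gam(E^{\cdot}), R) \simeq \hat R^I$ concentrated in cohomological degree zero. Since $\gam(E^{\cdot})$ represents $\mathrm{R}\Gamma_I(R)$ and $R$ is Noetherian, this is the Greenlees--May adjunction $\RHom_R(\mathrm{R}\Gamma_I(R), R) \simeq \mathrm{L}\Lambda^I(R)$ combined with the vanishing of the positive left derived functors of $I$-adic completion on $R$, which gives $\mathrm{L}\Lambda^I(R) \simeq \hat R^I$ in degree zero. Alternatively one may verify this by a direct \v{C}ech computation on a generating system of $\sqrt{I}$, since $\gam(E^{\cdot})$ is quasi-isomorphic to such a \v{C}ech complex. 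Granted this, the middle terms in the long exact sequence equal $\hat R^I$ for $i = 0$ and vanish otherwise.

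With this in place, the three assertions follow by direct inspection, using Lemma~\ref{2.2}(c) to kill the $\Ext$-terms of $H^c_I(R)$ in degrees below $c$. For $i < 0$ both neighbouring terms vanish ($\Ext^{i+c-1}_R(H^c_I(R), R) = 0$ since $i+c-1 < c$, and $H^i\RHom_R(\gam(E^{\cdot}), R) = 0$), forcing $\Ext^i_R(C^{\cdot}_R(I), R) = 0$ and proving (c). The piece at $i = 0$ collapses to
\[
0 \to \Ext^0_R(C^{\cdot}_R(I), R) \to \hat R^I \to \Ext^c_R(H^c_I(R), R) \to \Ext^1_R(C^{\cdot}_R(I), R) \to 0,
\]
which is (a), since $\Ext^{c-1}_R(H^c_I(R), R) = 0$ by Lemma~\ref{2.2}(c) and $H^1\RHom_R(\gam(E^{\cdot}), R) = 0$. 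Finally, for $i \geq 1$ both $H^i$ and $H^{i+1}$ of $\hat R^I$ vanish, so the long exact sequence degenerates to the isomorphism $\Ext^{i+c}_R(H^c_I(R), R) \simeq \Ext^{i+1}_R(C^{\cdot}_R(I), R)$ asserted in (b).
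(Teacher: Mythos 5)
Your proof is correct and follows essentially the same line as the paper: apply the contravariant derived functor $\RHom_R(-,R)$ to the truncation short exact sequence, identify the middle term $\RHom_R(\Gamma_I(E^{\cdot}),R)$ with $\hat R^I$ concentrated in degree zero, and read off (a), (b), (c) from the resulting long exact sequence together with Lemma~\ref{2.2}(c). The only cosmetic difference is that you invoke the Greenlees--May adjunction $\RHom_R(\mathrm{R}\Gamma_I(R),R)\simeq \mathrm{L}\Lambda^I(R)$ directly, whereas the paper reaches the same identification by writing $\Gamma_I(E^{\cdot})\qism\Check{C}_{\yy}\otimes E^{\cdot}$, using $R\qism\Hom_R(E^{\cdot},E^{\cdot})$ to reduce to $\RHom_R(\Check{C}_{\yy},R)$, and then citing \cite[Theorem~1.1]{pS} -- which is precisely the \v Cech incarnation of that adjunction you mention as your alternative route.
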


\begin{proof} Let $R \qism E^{\cdot}$ denote the minimal injective resolution of the
Gorenstein ring $R.$ Apply the functor $\Hom(\cdot, E^{\cdot})$ to the
short exact sequence of the truncation complex as defined in
\ref{3.1}. Since $E^{\cdot}$ is a complex of injective $R$-modules it
provides a short exact sequence of complexes
\[
0 \to \Hom_R(C^{\cdot}_R(I), E^{\cdot}) \to \Hom_R(\gam(E^{\cdot}),E^{\cdot}) \to \Hom(H^c_I(R),E^{\cdot})[c] \to 0.
\]
By the definition the cohomology modules of the complexes on the left resp. on the right are 
$\Ext^i_R(C^{\cdot}_R(I), R)$ resp. $\Ext^{i-c}_R(H^c_I(R), R)$ for all $i \in \mathbb Z.$

Let us consider the complex in the middle. Let $\yy = y_1, \ldots, y_s$ be a generating set
of the ideal $I.$ Then $\gam(E^{\cdot}) \qism \Check{C}_{\yy} \otimes E^{\cdot},$ where $\Check{C}_{\yy}$ 
denotes the \v{C}ech complex with respect to $\yy$ (cf \cite[Theorem 1.1]{pS} for the details).

Therefore $\Hom_R(\gam(E^{\cdot}), E^{\cdot}) \simeq \Hom_R(\Check{C}_{\yy},\Hom_R(E^{\cdot}, E^{\cdot})).$ Because 
of the quasi-isomor\-phism $R \qism \Hom_R(E^{\cdot}, E^{\cdot}),$ recall that $ E^{\cdot}$
is a dualizing complex of $R.$ Therefore the last complex is a representative of 
$\operatorname{R} \Hom_R(\Check{C}_{\yy}, R)$ in the derived category. But it follows that
\[
 \operatorname{R} \Hom_R(\Check{C}_{\yy}, R) \qism {\hat R}^I,
\]
as shown in \cite[Theorem 1.1]{pS}. Therefore with the above considerations the long exact 
cohomology sequence provides the statements (a) and (b) of the claim. With the aid of \ref{2.2} 
this proves also the statement in (c).
\end{proof}

As an application there is another necessary criterion for an ideal $I \subset R$ to
be cohomologically a complete intersection.

\begin{corollary} \label{3.5} Let $I \subset R$ denote an ideal of $\height I = c.$ Suppose that
 $H^i_I(R) = 0$ for all $i \not= c.$ Then there is an isomorphism ${\hat R}^I \simeq \Ext^c_R(H^c_I(R),R)$
and the vanishing $\Ext^{i+c}_R(H^c_I(R),R) = 0$ for all $i \not= 0.$
\end{corollary}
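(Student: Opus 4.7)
The plan is to exploit Lemma \ref{3.4} directly, just as Corollary \ref{3.3} exploited Lemma \ref{3.2}. The hypothesis $H^i_I(R)=0$ for $i\neq c$ tells us, by the remarks collected after Definition \ref{3.1}, that every cohomology module of the truncation complex $C^{\cdot}_R(I)$ vanishes. Since $C^{\cdot}_R(I)$ is moreover bounded (its terms are nonzero only in degrees $c,\ldots,d$), it is quasi-isomorphic to the zero complex in the derived category. Consequently
\[
\operatorname{R}\Hom_R(C^{\cdot}_R(I),R) \qism 0,
\]
so $\Ext^i_R(C^{\cdot}_R(I),R)=0$ for every $i\in\mathbb Z$.

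Feeding this vanishing into the four-term exact sequence of Lemma \ref{3.4}(a) makes the outer terms disappear, and the middle arrow becomes an isomorphism
\[
{\hat R}^I \qism \Ext^c_R(H^c_I(R),R),
\]
which is the first assertion. The isomorphisms of Lemma \ref{3.4}(b) turn into $\Ext^{i+c}_R(H^c_I(R),R)=0$ for every $i>0$, giving the vanishing above the distinguished degree.

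To cover the vanishing below the degree $c$, i.e.\ the range $i<0$ in the statement, I would simply invoke Lemma \ref{2.2}(c), which already yields $\Ext^i_R(H^j_I(R),R)=0$ for all $i<j$ without any extra assumption and, specialized to $j=c$, produces $\Ext^{i+c}_R(H^c_I(R),R)=0$ for all $i<0$.

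There is really no hard step here: once one observes that $C^{\cdot}_R(I)$ is a bounded acyclic complex and hence zero in the derived category, everything follows formally from the long exact sequence packaged in Lemma \ref{3.4}. The only point to be a little careful about is making explicit that the vanishing of $\Ext^i_R(C^{\cdot}_R(I),R)$ refers to hyperext and is indeed a consequence of $C^{\cdot}_R(I)\qism 0$; this is why Lemma \ref{3.4} was phrased for the hyperderived functors from the outset.
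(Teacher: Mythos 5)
Your proof is correct and follows essentially the same route as the paper: the hypothesis makes the truncation complex $C^{\cdot}_R(I)$ a bounded acyclic complex, so $\RHom_R(C^{\cdot}_R(I),R)\qism 0$ (the paper phrases this as $\Hom_R(\cdot,E^{\cdot})$ preserving exactness of the acyclic complex), and the conclusion then falls out of Lemma \ref{3.4}. Your explicit appeal to Lemma \ref{2.2}(c) for the range $i<0$ is a careful touch, since Lemma \ref{3.4}(a) and (b) only cover $i\ge 0$ and the paper's own proof leaves that range implicit.
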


\begin{proof} By virtue of the Lemma \ref{3.4} It will be enough to prove that $\Ext^i_R(C^{\cdot}_R(I),R)$ vanishes for all $i \not= 0.$ Let $R \qism E^{\cdot}$ denote the
 minimal injective resolution of $R.$ Then $\Hom_R(C^{\cdot}_R(I),E^{\cdot})$ is a representative
of $\Ext^i_R(C^{\cdot}_R(I),R)$ in the derived category. By the assumption $C^{\cdot}_R(I)$ is an
exact complex. So the conclusion follows because $\Hom_R(\cdot, E^{\cdot})$ preserves exactness.
\end{proof}

In the next result there is a consideration of the Bass numbers of the local cohomology modules $H^c_I(R).$ This provides a certain necessary numerical condition for $I$ to be a cohomologically complete intersection as we shall see later.

\begin{lemma} \label{3.6} With the above notation there is a natural homomorphism
 \[
 \phi :  \Ext^d_R(k, H^c_I(R)) \to k  .
 \]
In addition, suppose that $H^i_I(R) = 0$ for all $i \not= c.$ Then $\phi$ is an isomorphism and
$ \Ext^i_R(k, H^c_I(R)) = 0$ for all $i \not= d.$
\end{lemma}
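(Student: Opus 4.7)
The plan is to construct $\phi$ as the composite of two canonical morphisms, then to exploit the extra hypothesis by feeding Corollary~\ref{3.3} into Proposition~\ref{2.4} for the range $i \le d$ and into a spectral-sequence degeneration argument for $i > d$.

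To build $\phi$ unconditionally, I combine two ingredients. First, the long exact sequence of Lemma~\ref{3.2} supplies a canonical morphism $H^d_{\mathfrak m}(H^c_I(R)) \to E$. Second, the standard spectral sequence
\[
E_2^{p,q} = \Ext^p_R(k, H^q_{\mathfrak m}(H^c_I(R))) \Longrightarrow \Ext^{p+q}_R(k, H^c_I(R))
\]
satisfies $E_2^{p,q} = 0$ for $q > d$ by Lemma~\ref{2.2}(c); so $E_\infty^{0,d}$ appears simultaneously as a subobject of $E_2^{0,d}$ and as a quotient of the abutment, yielding an edge map $\Ext^d_R(k, H^c_I(R)) \to \Hom_R(k, H^d_{\mathfrak m}(H^c_I(R)))$. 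Composing this edge map with $\Hom_R(k, \cdot)$ applied to the Lemma~\ref{3.2} morphism and with the canonical $\Hom_R(k, E) \simeq k$ defines $\phi$.

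Now assume $H^i_I(R) = 0$ for all $i \neq c$. By Corollary~\ref{3.3} the morphism $H^d_{\mathfrak m}(H^c_I(R)) \to E$ is an isomorphism and $H^i_{\mathfrak m}(H^c_I(R)) = 0$ for $i \neq d$. In particular, applying Proposition~\ref{2.4} with $X = H^c_I(R)$ and $s = d$ yields $\Ext^i_R(k, H^c_I(R)) = 0$ for $i < d$ and an isomorphism $\Ext^d_R(k, H^c_I(R)) \simeq \Hom_R(k, H^d_{\mathfrak m}(H^c_I(R)))$. This last isomorphism is precisely the edge map used to define $\phi$, so composing with the two further isomorphisms produced by the hypothesis identifies $\phi$ with the claimed isomorphism onto $k$.

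For $i > d$, the same substitution collapses the spectral sequence to the single row $q = d$, where $E_2^{p,d} = \Ext^p_R(k, E)$ vanishes for $p > 0$ by injectivity of $E$ and equals $k$ for $p = 0$. Only one $E_2$-term survives, the spectral sequence degenerates at $E_2$, and $\Ext^i_R(k, H^c_I(R)) = 0$ for all $i > d$. The main point requiring care is the consistency check that the isomorphism supplied by Proposition~\ref{2.4} really coincides with the edge map built into $\phi$; this is transparent from the proof of Proposition~\ref{2.4}, which identifies that edge map in exactly this form once the required vanishing in low degree holds.
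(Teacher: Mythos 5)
Your argument is correct, and it follows a genuinely different route for the construction of $\phi$ than the paper's. The paper builds $\phi$ in one step: it applies $\operatorname{R}\Hom_R(k,\cdot)$ directly to the truncation short exact sequence, uses that $\operatorname{R}\Hom_R(k,\Gamma_I(E^{\cdot}))$ is represented by $\Hom_R(k,E^{\cdot})\simeq k[-n]$, and reads $\phi$ off the resulting long exact cohomology sequence; under the extra hypothesis it then invokes exactness of $C^{\cdot}_R(I)$ to get the isomorphism and the vanishing at once. You instead factor $\phi$ through the intermediate object $\Hom_R(k, H^d_{\mathfrak m}(H^c_I(R)))$: the edge map of the composite-functor spectral sequence (valid by Lemma~\ref{2.2}(c), which kills the rows $q>d$), followed by $\Hom_R(k,\cdot)$ applied to the Lemma~\ref{3.2} morphism $H^d_{\mathfrak m}(H^c_I(R))\to E$, followed by $\Hom_R(k,E)\simeq k$. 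Both constructions ultimately rest on the truncation complex and $\mathfrak m$-local duality, and they agree; the paper itself records the commutative square linking the two descriptions in its proof of Theorem~\ref{4.1}~(ii)$\Leftrightarrow$(v), so your explicit factorization is precisely the one needed there. A small advantage of your route is that the compatibility of $\phi$ with the map $\Hom_R(k,H^d_{\mathfrak m}(H^c_I(R)))\to k$ becomes tautological; a small advantage of the paper's is brevity, since the conclusions under the hypothesis drop out in a single step from exactness of $C^{\cdot}_R(I)$. Your verification that the Proposition~\ref{2.4} isomorphism coincides with the edge map is the right point to flag, and it does hold for the reason you give (both identify $E_2^{0,d}\simeq E_\infty^{0,d}\simeq \Ext^d_R(k,H^c_I(R))$ once the rows $q<d$ vanish).
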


\begin{proof}
 Apply the derived functor $\operatorname{R} \Hom_R(k,\cdot)$ to the short exact sequence as it is defined in the definition of the truncation complex (cf. \ref{3.1}). Then there is the following short
exact sequence of complexes in the derived category
\[
 0 \to \operatorname{R} \Hom_R(k,H^c_I(R))[-c] \to \operatorname{R} \Hom_R(k,\Gamma_{\mathfrak m}(E^{\cdot})) \to \operatorname{R} \Hom_R(k,C^{\cdot}_R(I)) \to 0.
\]
Now we consider the complex in the middle. It is represented by $\Hom_R(k, \Gamma_{\mathfrak m}(E^{\cdot}))$ since $\Gamma_{\mathfrak m}(E^{\cdot})$ is a complex of injective modules.
Moreover there are the following isomorphisms
\[
 \Hom_R(k, \Gamma_{\mathfrak m}(E^{\cdot})) \simeq \Hom_R(k, E^{\cdot}) \simeq E[-n].
\]
By virtue of the long exact cohomology sequence it yields the natural homomorphism of the statement. Under the additional assumption of $H^i_I(R) = 0$ for all $i \not= c$ it follows that $C^{\cdot}_R(I)$ is
an exact complex. Therefore $\operatorname{R} \Hom_R(k,C^{\cdot}_R(I))$ is also an exact complex.
Whence the long exact cohomology sequence applied to the above short exact sequence provides
the statements on the Bass numbers of $H^c_I(R).$
\end{proof}

\begin{conjecture} \label{3.7}
It is an open problem whether the natural homomorphism
\[
\phi : \Ext^d_R(k, H^c_I(R)) \to k
\]
is in general non-zero, that is a surjection. In general the $k$-vector space $\Ext^d_R(k, H^c_I(R))$ is
not of finite dimension (cf. Example \ref{5.2}).
\end{conjecture}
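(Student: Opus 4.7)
Since Conjecture~\ref{3.7} is explicitly flagged as an open problem, what follows is a strategy rather than a complete proof. The map $\phi$ is built in the proof of Lemma~\ref{3.6} as a connecting homomorphism for the truncation triangle of Definition~\ref{3.1}: applying $\RHom_R(k,-)$ and using $\Hom_R(k,\Gamma_{\mathfrak m}(E^{\cdot})) \simeq E[-n]$ produces a distinguished triangle whose connecting map in total degree $d+c$ is precisely $\phi \colon \Ext^d_R(k,H^c_I(R)) \to k$. My plan is to reformulate non-vanishing of $\phi$ as a concrete question about the truncation complex in top degree, and then to attempt to produce a distinguished, fundamental-class-like element of $\Ext^d_R(k,H^c_I(R))$ that witnesses non-vanishing.

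The first step is to combine Proposition~\ref{2.4} with Lemma~\ref{3.2}. Assume for the moment that the low Bass numbers of $H^c_I(R)$ vanish, so that $\Ext^d_R(k,H^c_I(R)) \simeq \Hom_R(k,H^d_{\mathfrak m}(H^c_I(R)))$; under this identification $\phi$ is the restriction to the socle of the canonical morphism $H^d_{\mathfrak m}(H^c_I(R)) \to E$ appearing in Lemma~\ref{3.2}. Since for an $\mathfrak m$-torsion module a map to $E$ is nonzero exactly when its restriction to the socle is nonzero, $\phi$ being a surjection is equivalent to $H^n_{\mathfrak m}(C^{\cdot}_R(I)) \ne E$, i.e.\ to the cokernel in Lemma~\ref{3.2} being a proper quotient of $E$. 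The second step is to Matlis-dualize this: Lemma~\ref{2.2}(b) translates $H^n_{\mathfrak m}(C^{\cdot}_R(I))=0$ into a statement about $\Ext^0_{\hat R}$ of (a shift of) the dual of the truncation complex, and one would then try to exhibit in it the canonical element corresponding to the image of $1 \in \hat R^I$ under the middle map of Lemma~\ref{3.4}(a). In favourable cases one should be able to remove the provisional low-Bass vanishing by feeding Proposition~\ref{2.7} into the hypercohomology spectral sequence of $\Rgam(C^{\cdot}_R(I))$.

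The main obstacle, and the reason the question is genuinely open, is that $C^{\cdot}_R(I)$ can be cohomologically wild: Example~\ref{5.2} already shows that $\Ext^d_R(k,H^c_I(R))$ need not be finite-dimensional over $k$, which rules out any crude length, rank, or dimension count as a route to deciding whether $\phi$ surjects onto $k$. A realistic intermediate target is to verify the conjecture under extra hypotheses that tame $C^{\cdot}_R(I)$ in top degree --- for instance when $\cd I \le c+1$ (so that $C^{\cdot}_R(I)$ has only one nonvanishing cohomology module), or when $R/I$ is generically Cohen--Macaulay, where Theorem~\ref{0.1} already forces $\phi$ to be an isomorphism --- and then to attempt a passage to the general case via the hypercohomology spectral sequences governing Lemmas~\ref{3.2} and~\ref{3.4}. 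Producing the required canonical nonzero socle class without any finiteness on $H^c_I(R)$ appears to require genuinely new input, presumably an intrinsic \emph{fundamental class} in $H^d_{\mathfrak m}(H^c_I(R))$ attached to $R/I$, and this is what I expect to be the hard part.
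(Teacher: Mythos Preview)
The statement is Conjecture~\ref{3.7}, explicitly labelled an open problem; the paper does not prove it and offers no argument beyond constructing $\phi$ in Lemma~\ref{3.6} and pointing to Example~\ref{5.2}. There is therefore no proof in the paper to compare your proposal against, and you are right to present a strategy rather than a proof.

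One technical point in your outline is incorrect and would block the approach as stated. The claim that a nonzero map $M \to E$ from an $\mathfrak m$-torsion module is automatically nonzero on the socle fails in general: the composite $R/\mathfrak m^2 \twoheadrightarrow k \hookrightarrow E$ is nonzero but kills $\operatorname{soc}(R/\mathfrak m^2) = \mathfrak m/\mathfrak m^2$. Hence your asserted equivalence ``$\phi$ surjective $\Leftrightarrow H^n_{\mathfrak m}(C^{\cdot}_R(I)) \ne E$'' breaks in exactly the direction you would need: knowing only that the map $H^d_{\mathfrak m}(H^c_I(R)) \to E$ of Lemma~\ref{3.2} is nonzero does not force its restriction to socles to be nonzero. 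What does hold is that \emph{surjectivity} of that map (i.e.\ $H^n_{\mathfrak m}(C^{\cdot}_R(I)) = 0$) implies $\phi$ is surjective, since $E$ is injective and the map then splits; but that is a strictly stronger target than the conjecture itself. The remainder of your plan --- passing through Lemma~\ref{3.2}, Proposition~\ref{2.4}, and the Matlis-dual picture of Lemma~\ref{2.2}(b) and Lemma~\ref{3.4} --- is a reasonable scaffold, and your identification of a ``fundamental class'' in $H^d_{\mathfrak m}(H^c_I(R))$ as the missing ingredient is in the right spirit, but the reformulation step must be repaired before the strategy can carry weight.
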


In the next we are interested in the endomorphism ring of $H^c_I(R).$ As a consequence it
provides another necessary condition for an ideal $I$ to be a cohomologically complete
intersection.

\begin{lemma} \label{3.8}
 Let $I \subset R$ denote an ideal of a Gorenstein ring $(R,\mathfrak m)$ and $c = \height I.$ Then there is a natural isomorphism
\[
 \Hom_R(H^c_I(R), H^c_I(R)) \simeq \Ext^c_R(H^c_I(R),R).
\]
Moreover, suppose that $H^i_I(R) = 0$ for all $i \not= c.$ Then ${\hat R}^I \simeq \Hom_R(H^c_I(R), H^c_I(R))$ and $\Ext^i_R(H^c_I(R),H^c_I(R)) = 0$ for all $i \not= 0.$
\end{lemma}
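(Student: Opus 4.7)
The plan is to establish the natural isomorphism by a direct computation of $\Ext^c_R(H^c_I(R),R)$ via the minimal injective resolution $R \qism E^{\cdot}$, and then deduce the ``moreover'' part by combining this with Corollary~\ref{3.5}.

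First, I would apply Proposition~\ref{2.3} with $X = H^c_I(R)$ (whose support lies in $V(I)$) and $J^{\cdot} = E^{\cdot}$ to obtain the isomorphism of complexes $\Hom_R(H^c_I(R), E^{\cdot}) \simeq \Hom_R(H^c_I(R), \gam(E^{\cdot}))$. Both sides then compute $\operatorname{R}\Hom_R(H^c_I(R),R)$, so it suffices to evaluate $H^c$ of the right-hand side. Since $\gam(E^{\cdot})^i = 0$ for $i < c$ (each $\Gamma_I(E_R(R/\mathfrak p))$ is $0$ unless $\mathfrak p \in V(I)$, i.e.\ $\height \mathfrak p \geq c$), there is no coboundary coming in from degree $c - 1$, so left exactness of $\Hom$ yields
\[
H^c\bigl(\Hom_R(H^c_I(R), \gam(E^{\cdot}))\bigr) \simeq \Hom_R(H^c_I(R),\, \Ker d^c) = \Hom_R(H^c_I(R), H^c_I(R)),
\]
where $d^c : \gam(E^{\cdot})^c \to \gam(E^{\cdot})^{c+1}$ is the differential and $\Ker d^c = H^c_I(R)$ by construction. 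This is the asserted natural isomorphism.

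For the second part, the hypothesis $H^i_I(R) = 0$ for $i \not= c$ renders the truncation complex $C^{\cdot}_R(I)$ exact, so the embedding $H^c_I(R)[-c] \hookrightarrow \gam(E^{\cdot})$ from Definition~\ref{3.1} is a quasi-isomorphism. Because $\gam(E^{\cdot})$ is itself a complex of injective $R$-modules, it serves as an injective resolution of $H^c_I(R)[-c]$. Combining this with Proposition~\ref{2.3} produces, in the derived category,
\[
\operatorname{R}\Hom_R(H^c_I(R), H^c_I(R))[-c] \simeq \Hom_R(H^c_I(R), \gam(E^{\cdot})) \simeq \operatorname{R}\Hom_R(H^c_I(R), R).
\]
Passing to cohomology and invoking Corollary~\ref{3.5} (which supplies $\Ext^c_R(H^c_I(R),R) \simeq {\hat R}^I$ together with vanishing of all other $\Ext_R^j(H^c_I(R),R)$) then yields both $\hat R^I \simeq \Hom_R(H^c_I(R),H^c_I(R))$ and $\Ext^i_R(H^c_I(R),H^c_I(R)) = 0$ for $i \not= 0$.

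The main point requiring care is the justification that $\gam(E^{\cdot})$ can replace an injective resolution of $H^c_I(R)[-c]$ in the last display; this is exactly what allows a single Hom-complex to simultaneously represent $\operatorname{R}\Hom_R(H^c_I(R),R)$ (via Proposition~\ref{2.3}) and $\operatorname{R}\Hom_R(H^c_I(R), H^c_I(R))[-c]$ (via the quasi-isomorphism). Once this identification is in place the remainder of the argument is a direct appeal to Corollary~\ref{3.5}, and the unconditional first isomorphism follows from the same computation without any vanishing hypothesis.
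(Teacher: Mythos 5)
Your proof is correct, and for the first (unconditional) isomorphism it takes a genuinely simpler route than the paper. The paper applies $\operatorname{R}\Hom_R(H^c_I(R),\cdot)$ to the short exact sequence of the truncation complex, identifies the middle term via Proposition~\ref{2.3}, and then must show $\Ext^i_R(H^c_I(R),C^{\cdot}_R(I)) = 0$ for $i = c-1,c$ by a separate spectral sequence argument over an injective resolution $F^{\cdot}$ of $C^{\cdot}_R(I)$; the desired isomorphism is then read off a four-term exact sequence. You bypass the truncation complex entirely: since $\gam(E^{\cdot})$ vanishes in degrees $< c$, the complex $\Hom_R(H^c_I(R),\gam(E^{\cdot}))$ vanishes there too, so its $c$-th cohomology is the kernel of the degree-$c$ differential, which by left exactness of $\Hom$ is $\Hom_R(H^c_I(R),\Ker d^c) = \Hom_R(H^c_I(R),H^c_I(R))$; combined with Proposition~\ref{2.3}, which lets the same complex compute $\Ext^c_R(H^c_I(R),R)$, the isomorphism falls out directly. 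The induced map is the same natural one, as both arise from the inclusion $H^c_I(R) = \Ker d^c \hookrightarrow \gam(E^{\cdot})^c$. For the ``moreover'' part your argument (that exactness of $C^{\cdot}_R(I)$ makes $H^c_I(R)[-c] \to \gam(E^{\cdot})$ a quasi-isomorphism onto a complex of injectives, hence an injective resolution, so a single Hom-complex represents both $\operatorname{R}\Hom_R(H^c_I(R),H^c_I(R))[-c]$ and $\operatorname{R}\Hom_R(H^c_I(R),R)$) is equivalent to the paper's invocation that $\Hom_R(H^c_I(R),F^{\cdot})$ is exact when $F^{\cdot}$ is; both then conclude via Corollary~\ref{3.5}. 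Net gain of your approach: the first statement becomes a one-step left-exactness computation rather than a vanishing argument plus an exact sequence; the paper's route has the advantage of fitting uniformly into the truncation-complex framework used throughout the section.
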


\begin{proof}
Consider the short exact sequence of complexes as introduced by the definition of the truncation complex  in \ref{3.1}. Apply the derived functor $\operatorname{R} \Hom_R(H^c_I(R), \cdot)$ to this sequence. So, in the derived category there is a short exact sequence of complexes
\[
\begin{gathered}
0 \to \RHom_R(H^c_I(R), H^c_I(R))[-c] \to \RHom_R(H^c_I(R), \Gamma_I(E)) \to \\
\to \RHom_R(H^c_I(R), C^{\cdot}_R(I)) \to 0.
\end{gathered}
\]
Because $\Gamma_I(E^{\cdot})$ is a complex of injective $R$-modules a representative for the
complex in the middle is given by $\Hom_R(H^c_I(R), \Gamma_I(E^{\cdot})).$ Now $\Supp_R(H^c_I(R)) \subset V(I).$ Whence Proposition \ref{2.3} shows that this complex is isomorphic to $\Hom_R(H^c_I(R),E^{\cdot}).$
So the above short exact sequence of complexes induces an exact sequence
\[
\begin{gathered}
\Ext_R^{c-1}(H^c_I(R), C_R(I)) \to \Hom_R(H^c_I(R),H^c_I(R)) \to \\ \to \Ext_R^c( H^c_I(R),
R) \to \Ext_R^c(H^c_I(R),C^{\cdot}_R(I)).
\end{gathered}
\]
In order to finish the proof of the first statement it will be enough to show that
$\Ext^i_R(H^c_I(R),C^{\cdot}_R(I)) = 0$ for $i = c-1, c.$

To this end let $C^{\cdot}_R(I) \qism F^{\cdot}$ denote an injective resolution
of the complex $C^{\cdot}_R(I)$ (cf. \cite{rH}). Then by definition it
follows that $H^i(F^{\cdot}) = 0$ for all $i \leq c$ and all $i > n$ resp.
$H^i(F^{\cdot}) \simeq H^i_I(R)$ for $c < i \leq n.$ Moreover,
\[
\Ext^i_R(H^c_I(R),C^{\cdot}_R(I)) \simeq H^i(\Hom_R(H^c_I(R),F^{\cdot})).
\]
In order to compute the cohomology of the complex $\Hom_R(H^c_I(R),F^{\cdot})$
there is the following spectral sequence
\[
E^{p,q}_2 = \Ext_R^p(H^c_I(R), H^q(F^{\cdot})) \Longrightarrow E^{p+q}_{\infty} =
H^{p+q}(\Hom_R(H^c_I(R), F^{\cdot})).
\]
Let $p+q \leq c.$ For $p \geq 0$ it follows that $q \leq c.$ Therefore it turns
out that
\[
\Ext_R^p(H^c_I(R), H^q(F^{\cdot}))) = 0 \text{ for all } p+q \leq c.
\]
So that $H^i(\Hom_R(H^c_I(R),F^{\cdot})) = 0$ for all $i \leq c$ as a consequence of the spectral sequence.
This proves the first isomorphism of the statement.

Now assume that $H^i_I(R) = 0$ for all $i \not= c.$ Then the complex
$\Hom_R(H^c_I(R),F^{\cdot})$ is exact. So there are isomorphisms
\[
\Ext^{i-c}_R(H^c_I(R),H^c_I(R)) \simeq  \Ext^i_R(H^c_I(R), R) \text{ for all } i \in \mathbb Z.
\]
By view of Corollary \ref{3.5} this completes the proof.
\end{proof}

The previous result is a slight extension of results of the first
author and St\"uckrad (cf. \cite[2.2 (iii)]{HS}). There it is shown
that the endomorphism ring of $H^c_I(R)$ is isomorphic to $R$ for a
cohomologically complete intersection ideal $I$ in a complete local
Gorenstein ring $(R, \mathfrak m).$

Moreover, the previous result has an interesting application. It implies the non-vanishing of a
certain local cohomology module of $H^c_I(R), c = \height I.$

\begin{corollary} \label{3.9} Let $I \subset R$ denote an ideal of a Gorenstein ring $(R,\mathfrak m)$
 and $c = \height I.$ Then $H^d_{\mathfrak m}(H^c_I(R)) \not= 0$ where $d = \dim R/I.$
\end{corollary}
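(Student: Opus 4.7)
My plan is to combine Matlis duality (in the form of Lemma \ref{2.2}(b)) with the endomorphism-ring identification of Lemma \ref{3.8}. The key observation is that $\Hom_R(H^c_I(R),H^c_I(R))$ obviously contains the identity, so it is non-zero, and Lemma \ref{3.8} transports this non-vanishing into an $\Ext^c$-group which, after passing to the completion, is precisely the Matlis dual of $H^d_{\mathfrak m}(H^c_I(R))$.

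In detail, the first step is to apply Lemma \ref{2.2}(b) with $i=d=n-c$ and $j=c$ to obtain the isomorphism
\[
\Ext^{c}_{\hat R}\bigl(H^c_{I\hat R}(\hat R),\hat R\bigr) \simeq \Hom_R\bigl(H^d_{\mathfrak m}(H^c_I(R)),E\bigr).
\]
Since $E$ is an injective cogenerator over $R$, the non-vanishing of $H^d_{\mathfrak m}(H^c_I(R))$ is equivalent to the non-vanishing of the $\Ext$-module on the left-hand side. Thus it suffices to prove that $\Ext^c_{\hat R}(H^c_{I\hat R}(\hat R),\hat R)\neq 0.$

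The second step is to apply Lemma \ref{3.8} to the local Gorenstein ring $\hat R$ and the ideal $I\hat R$. Because the completion morphism $R\to \hat R$ is faithfully flat, $\hat R$ is again Gorenstein and $\height I\hat R = c$, so the hypotheses of Lemma \ref{3.8} are satisfied (using only its first, unconditional statement). This yields a natural isomorphism
\[
\Hom_{\hat R}\bigl(H^c_{I\hat R}(\hat R),H^c_{I\hat R}(\hat R)\bigr)\simeq \Ext^c_{\hat R}\bigl(H^c_{I\hat R}(\hat R),\hat R\bigr).
\]
The endomorphism ring on the left contains the identity of $H^c_{I\hat R}(\hat R)$, which is non-zero because $c=\height I\hat R$ forces $H^c_{I\hat R}(\hat R)\neq 0$. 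Hence the $\Ext$-group on the right is non-zero, and combining with the first step gives $H^d_{\mathfrak m}(H^c_I(R))\neq 0$, as desired.

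I do not foresee a genuine obstacle: the argument is essentially a one-line consequence of the two preceding lemmas, once one notices that Lemma \ref{3.8} and Lemma \ref{2.2}(b) ``fit together'' through the completion $\hat R$. The only point that requires a touch of care is that Lemma \ref{2.2}(b) naturally produces an $\Ext$-group over $\hat R$ (not over $R$), which is why Lemma \ref{3.8} must be invoked for $\hat R$ rather than $R$; fortunately this causes no difficulty because the Gorenstein hypothesis and the height of $I$ are both preserved under completion.
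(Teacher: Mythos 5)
Your argument is correct and is essentially the paper's own proof: both reduce via Lemma \ref{2.2}(b) to showing $\Ext^c_{\hat R}(H^c_{I\hat R}(\hat R),\hat R)\neq 0$, and both conclude via the unconditional isomorphism of Lemma \ref{3.8} with the (visibly nonzero) endomorphism ring of $H^c_{I\hat R}(\hat R)$. The paper phrases the completion step as ``assume $R$ is complete'' while you apply Lemma \ref{3.8} directly over $\hat R$; this is the same reduction.
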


\begin{proof} By view of Lemma \ref{2.2} (b) it will be enough to show that
$\Ext_{\hat R}^c(H^c_{I \hat R}(\hat R), \hat{R})$ does not vanish.
Let us assume that $R$ is a complete local ring. By virtue of
Corollary \ref{3.8} there is the following isomorphism
$\Hom_R(H^c_I(R), H^c_I(R)) \simeq \Ext^c_R(H^c_I(R),R).$ Because of
$H^c_I(R) \not= 0$ the endomorphism ring of $H^c_I(R)$ is
non-trivial.
\end{proof}

\section{Main results}
In this section let $(R, \mathfrak m)$ denote a $n$-dimensional
Gorenstein ring. Let $I \subset R$ be an ideal with $c = \height I$
and $\dim R/I = n -c.$ Then we shall prove our first
characterization of cohomologically complete intersections. To this
end let us fix the abbreviation $h(\mathfrak p) = \dim R_{\mathfrak
p} - c$ for a prime ideal $\mathfrak p \in V(I).$

\begin{theorem} \label{4.1} With the previous notation the following conditions are equivalent:
 \begin{itemize}
  \item[(i)] $H^i_I(R) = 0$ for all $i \not= c$, i.~e. $I$ is a cohomologically complete
  intersection.
  \item[(ii)] For all $\mathfrak p \in V(I)$ the natural map
\[
 H^{h(\mathfrak p)}_{\mathfrak pR_{\mathfrak p}}(H^c_{IR_{\mathfrak p}}(R_{\mathfrak p})) \to E(k(\mathfrak p))
\]
is an isomorphism and $H^i_{\mathfrak pR_{\mathfrak p}}(H^c_{IR_{\mathfrak p}}(R_{\mathfrak p})) = 0$ for all $i \not= h(\mathfrak p).$
   \item[(iii)] For all $\mathfrak p \in V(I)$ the natural map
\[
 H^{h(\mathfrak p)}_{\mathfrak p\widehat{R_{\mathfrak p}}}(H^c_{I\widehat{R_{\mathfrak p}}}(\widehat{R_{\mathfrak p}})) \to E(k(\mathfrak p))
\]
is an isomorphism and $H^i_{\mathfrak p\widehat{R_{\mathfrak p}}}(H^c_{I\widehat{R_{\mathfrak p}}}(\widehat{R_{\mathfrak p}})) = 0$ for all $i \not= h(\mathfrak p).$
  \item[(iv)] For all $\mathfrak p \in V(I)$ the natural map
\[
 \widehat{R_{\mathfrak p}} \to \Ext^c_{\widehat{R_{\mathfrak p}}}(H^c_{I\widehat{R_{\mathfrak p}}}(\widehat{R_{\mathfrak p}}),\widehat{R_{\mathfrak p}})
\]
is an isomorphism and $\Ext^i_{\widehat{R_{\mathfrak p}}}(H^c_{I\widehat{R_{\mathfrak p}}}(\widehat{R_{\mathfrak p}}),\widehat{R_{\mathfrak p}}) = 0$ or all $i \not= c.$
  \item[(v)] For all $\mathfrak p \in V(I)$ the natural map
\[
 \Ext^{h(\mathfrak p)}_{R_{\mathfrak p}}(k(\mathfrak p), H^c_{I R_{\mathfrak p}}(R_{\mathfrak p})) \to k(\mathfrak p)
\]
is an isomorphism and $\Ext^i_{R_{\mathfrak p}}(k(\mathfrak p), H^c_{I R_{\mathfrak p}}(R_{\mathfrak p})) = 0$  for all $i \not= h(\mathfrak p).$
 \end{itemize}
\end{theorem}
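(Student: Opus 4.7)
The plan is to exploit that conditions (ii)--(v) are pointwise conditions, one at each $\mathfrak p \in V(I)$, whereas (i) is a global vanishing; since $H^i_I(R)_{\mathfrak p} \simeq H^i_{IR_{\mathfrak p}}(R_{\mathfrak p})$, I would work prime by prime, by induction on $n = \dim R$. A preliminary observation is that each of (ii)--(v) forces $\height IR_{\mathfrak p} = c$ for every $\mathfrak p \in V(I)$, and hence $\dim R_{\mathfrak p}/IR_{\mathfrak p} = h(\mathfrak p)$: at a minimal prime $\mathfrak p$ of $I$ one has $H^c_{IR_{\mathfrak p}}(R_{\mathfrak p}) = H^c_{\mathfrak p R_{\mathfrak p}}(R_{\mathfrak p})$, which vanishes in Gorenstein $R_{\mathfrak p}$ unless $c = \height \mathfrak p$, so the iso with $E(k(\mathfrak p))$ in (ii) forces unmixedness of $R/I$ and thus the height statement at every $\mathfrak p \in V(I)$. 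The implications (i) $\Rightarrow$ (ii)--(v) are then routine: Proposition~\ref{2.5} gives the same unmixedness, localizing (i) at $\mathfrak p$ yields $H^i_{IR_{\mathfrak p}}(R_{\mathfrak p}) = 0$ for all $i \neq c$, and Corollary~\ref{3.3} applied in $R_{\mathfrak p}$ gives (ii), Lemma~\ref{3.6} gives (v), faithful flatness of completion gives (iii), and Lemma~\ref{2.2}(b) in $\widehat{R_{\mathfrak p}}$ converts (iii) into (iv).

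For the mutual equivalences among (ii)--(v) in isolation: (ii) $\Leftrightarrow$ (iii) follows from faithful flatness of $R_{\mathfrak p} \to \widehat{R_{\mathfrak p}}$ together with the agreement of the injective hulls of $k(\mathfrak p)$ over the two rings; (iii) $\Leftrightarrow$ (iv) is Matlis duality in the complete Gorenstein ring $\widehat{R_{\mathfrak p}}$ via Lemma~\ref{2.2}(b); and (ii) $\Leftrightarrow$ (v) uses Proposition~\ref{2.4} to translate vanishing of Bass numbers in degrees below $h(\mathfrak p)$ into vanishing of $H^i_{\mathfrak p R_{\mathfrak p}}(H^c_{IR_{\mathfrak p}}(R_{\mathfrak p}))$ in the same range (with vanishing above $h(\mathfrak p)$ already supplied by Lemma~\ref{2.2}(c) localized), while the identification of the top local cohomology with $E(k(\mathfrak p))$ is extracted by passing through the already-established (iii) $\Leftrightarrow$ (iv).

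The heart of the argument is (ii) $\Rightarrow$ (i). For $\mathfrak p \in V(I)$ with $\mathfrak p \neq \mathfrak m$, condition (ii) restricted to primes $\mathfrak q \subseteq \mathfrak p$ is exactly condition (ii) for the Gorenstein ring $R_{\mathfrak p}$ with ideal $IR_{\mathfrak p}$, and $\dim R_{\mathfrak p} < n$; by induction $H^i_{IR_{\mathfrak p}}(R_{\mathfrak p}) = 0$ for $i \neq c$, so $\Supp H^i_I(R) \subseteq \{\mathfrak m\}$ for every $i > c$. Consequently each $H^i(C^{\cdot}_R(I))$ is $\mathfrak m$-torsion, and Proposition~\ref{2.7} applied with $\mathfrak m$ in place of $I$ gives $H^i_{\mathfrak m}(C^{\cdot}_R(I)) \simeq H^i(C^{\cdot}_R(I))$. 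On the other hand, identifying the natural map in (ii) at $\mathfrak m$ with the connecting homomorphism produced by $\Rgam$ applied to the truncation triangle of Definition~\ref{3.1}, the four-term exact sequence and isomorphisms of Lemma~\ref{3.2} combined with (ii) at $\mathfrak m$ force $H^j_{\mathfrak m}(C^{\cdot}_R(I)) = 0$ for all $j$; combining the two conclusions yields $H^i(C^{\cdot}_R(I)) = 0$ for every $i$, which is (i).

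I expect the main obstacle to be this last step: the delicate points are verifying that the natural map in (ii) at $\mathfrak m$ is the connecting map of Lemma~\ref{3.2} (so that both the kernel $H^{n-1}_{\mathfrak m}(C^{\cdot}_R(I))$ and the cokernel $H^n_{\mathfrak m}(C^{\cdot}_R(I))$ in the four-term sequence get killed) and that the inductive restriction of (ii) to $R_{\mathfrak p}$ is compatible with the change of ring. A secondary technical point is the promotion, in the (v) $\Rightarrow$ (ii) direction, of the one-dimensional socle data furnished by Proposition~\ref{2.4} to a full injective-hull isomorphism, which I plan to handle indirectly through the Matlis-dual formulation (iv) rather than by a direct structural analysis of $H^{h(\mathfrak p)}_{\mathfrak p R_{\mathfrak p}}(H^c_{IR_{\mathfrak p}}(R_{\mathfrak p}))$.
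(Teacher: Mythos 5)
Most of your proposal tracks the paper's own proof: the implications (i) $\Rightarrow$ (ii) via Proposition \ref{2.5} and Corollary \ref{3.3} localized, (ii) $\Leftrightarrow$ (iii) by faithful flatness, (iii) $\Leftrightarrow$ (iv) by Matlis duality through Lemma \ref{2.2}(b), and the inductive argument for (ii) $\Rightarrow$ (i) using the truncation complex and Lemma \ref{3.2} are all essentially what the paper does (the paper inducts on $d = \dim R/I$ rather than $n = \dim R$, but since $c$ is fixed by unmixedness this is the same). Your preliminary observation that any of (ii)--(v) forces unmixedness is a clean alternative to the paper's appeal to Corollary \ref{3.9} in the inductive step.

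The genuine gap is in (v) $\Rightarrow$ (ii), which you call a ``secondary technical point.'' It is not. Proposition \ref{2.4} lets you translate the vanishing Bass numbers below $h(\mathfrak p)$ into vanishing of local cohomology in the same range, and the isomorphism in Proposition \ref{2.4} identifies $\Ext^{h(\mathfrak p)}_{R_{\mathfrak p}}(k(\mathfrak p), H^c_{IR_{\mathfrak p}}(R_{\mathfrak p}))$ with the \emph{socle} of $H^{h(\mathfrak p)}_{\mathfrak p R_{\mathfrak p}}(H^c_{IR_{\mathfrak p}}(R_{\mathfrak p}))$. So (v) gives you a map from that local cohomology module to $E(k(\mathfrak p))$ which is an isomorphism on socles, hence injective (the source is $\mathfrak p R_{\mathfrak p}$-torsion); but nothing yet forces \emph{surjectivity}, i.e., that the module is the full injective hull. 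Your plan to ``handle this indirectly through the Matlis-dual formulation (iv)'' does not give a route: Matlis duality converts (iii) into (iv), but going from the socle datum in (v) to either (iii) or (iv) is precisely the missing surjectivity, and dualizing a $1$-dimensional socle does not produce $\widehat{R_{\mathfrak p}}$. The paper resolves exactly this by deferring the converse to Theorem \ref{4.2}, whose proof (still needed for the ``iff'' in Theorem \ref{4.1}(v)) is substantial: after the inductive reduction to $\Supp H^i_I(R) \subseteq \{\mathfrak m\}$ for $i \neq c$, it compares a minimal injective resolution of $H^c_I(R)$ with the truncated one coming from $\Gamma_I(E^{\cdot})[-c]$, shows the discrepancy in top degree is multiplication by some $x \in R$, and then uses the Hartshorne--Lichtenbaum vanishing theorem (Proposition \ref{2.8}) together with a primary-decomposition argument on $\Ass R$ to force $x$ to be a unit. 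None of this is recoverable from the formal dualities (iii) $\Leftrightarrow$ (iv), and you should expect to reproduce an argument of that nature to close the loop from (v) back to (ii) or (i).
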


\begin{proof}
(i) $\Rightarrow $ (ii): Let $\mathfrak p \in V(I).$ Then $\height
IR_{\mathfrak p} = c$ (cf. Proposition \ref{2.5}) and $h(\mathfrak
p) = \dim R_{\mathfrak p}/IR_{\mathfrak p}$ as easily seen. Clearly
$H^i_{I R_{\mathfrak p}}(R_{\mathfrak p}) = 0$ for all $i \not= c =
\height IR_{\mathfrak p}.$ Therefore the statement turns out by
virtue of Corollary \ref{3.3}.

(ii) $\Leftrightarrow $ (iii): This equivalence is a consequence of
the faithful flatness of $R_{\mathfrak p} \to \widehat{R_{\mathfrak
p}}$ and the fact that local cohomology commutes with flat
extensions.

(iii) $\Leftrightarrow $ (iv): By Matlis duality this equivalence follows by Lemma \ref{2.2} (b). Recall
that $h(\mathfrak p) = \dim R_{\mathfrak p} -c$ by definition.

(ii) $\Leftrightarrow $ (v): Since both of the conditions localize it will be enough to prove
the equivalence for the maximal ideal of a local Gorenstein ring $(R, \mathfrak m).$ Then the equivalence of the vanishings follow (cf. Proposition \ref{2.4}) for $X = H^c_I(R)$ and all $i < d.$ Moreover it provides that the  natural map $\Ext^d_R(k, H^c_I(R)) \to \Hom_R(k, H^d_{\mathfrak m}(H^c_I(R))$ is an isomorphism. There is a commutative diagram
\[
\begin{array}{ccl}
 \Ext^d_R(k, H^c_I(R)) & \to & k \\
  \downarrow & & \downarrow \\
 \Hom_R(k, H^d_{\mathfrak m}(H^c_I(R))) & \to & k.
\end{array}
\]
By the construction (cf. Lemma \ref{3.2} and Lemma \ref{3.6}) it turns out that the second vertical map is the identity. Therefore $\Ext^d_R(k, H^c_I(R)) \to k$ is an isomorphism if and only if $\Hom_R(k, H^d_{\mathfrak m}(H^c_I(R)))  \to  k$ is an isomorphism. Now suppose that $H^d_{\mathfrak m}(H^c_I(R)) \to E$ is an isomorphism. Then it follows easily that $\Ext^d_R(k, H^c_I(R)) \to k$ is an isomorphism.
The converse follows by Theorem \ref{4.2} in a more general context.

(ii) $\Rightarrow $ (i): We proceed by induction on $d = \dim R/I.$
In the case of $d = 0$ the ideal $I$ is $\mathfrak m$-primary.
Therefore the statement is true because $R$ is a Gorenstein ring. So
let $d > 0.$ By view of Corollary \ref{3.9} $\dim R_{\mathfrak p} -
c = \dim R_{\mathfrak p}/IR_{\mathfrak p}$ and therefore $c =
\height IR_{\mathfrak p}$ for all $\mathfrak p \in V(I).$ By
induction hypothesis it follows that $H^i_{IR_{\mathfrak
p}}(R_{\mathfrak p}) = 0$ for all $i \not= c$ and all $\mathfrak p
\in V(I)\setminus \{ \mathfrak m\} .$ This means that $\Supp
H^i_I(R)) \subset \{\mathfrak m\}$ for all $i \not = c.$ Therefore
\[
 H^i_{\mathfrak m}(C^{\cdot}_R(I)) \simeq H^i(C^{\cdot}_R(I)) \simeq H^i_I(R) \text{ for } c < i \leq n
\]
and $H^i_{\mathfrak m}(C^{\cdot}_R(I)) = 0 $ for $i \leq c$ and $i > n.$ Because of the assumption for
$\mathfrak p = \mathfrak m,$ the maximal ideal, it follows that $H^i_I(R) = 0$ for all $i \not = c$ (cf. Lemma \ref{3.2}).
\end{proof}

Before we shall go into the details of the proof of Theorem \ref{0.1} we have to complete the proof of previous Theorem \ref{4.1} in the light of the Conjecture \ref{3.7}. To be more precise
we shall prove the following important result.

\begin{theorem} \label{4.2}
With the notion from the beginning of this section the following conditions are equivalent:
\begin{enumerate}
 \item[(i)] $H^i_I(R) = 0$ for all $i \not= c$, i.~e. $I$ is a cohomologically complete
  intersection.
 \item[(ii)] For every $\mathfrak p \in V(I)$ it holds
\[
 \dim_{k(\mathfrak p)} \Ext^i_{R_{\mathfrak p}}(k(\mathfrak p), H^c_{IR_{\mathfrak p}}(R_{\mathfrak p})) = \delta_{h(\mathfrak p),i}
\]
for all $i \in \mathbb Z.$
\end{enumerate}
\end{theorem}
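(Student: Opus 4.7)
The direction (i) $\Rightarrow$ (ii) is immediate from Theorem \ref{4.1}: under (i), condition (v) of that theorem holds at every $\mathfrak p \in V(I)$, and (v) in particular forces the dimension identity in (ii).

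For (ii) $\Rightarrow$ (i), the central observation is that hypothesis (ii) pins down the shape of a minimal injective resolution $0 \to K \to I^{\cdot}$ of $K := H^c_I(R)$. The $j$-th Bass number $\mu^j(\mathfrak p, K) = \dim_{k(\mathfrak p)} \Ext^j_{R_{\mathfrak p}}(k(\mathfrak p), K_{\mathfrak p})$ counts the copies of $E_R(R/\mathfrak p)$ in $I^j$, and since $\Supp K \subseteq V(I)$ kills $\mu^j$ off $V(I)$, hypothesis (ii) forces
\[
I^j = \bigoplus_{\mathfrak p \in V(I),\; h(\mathfrak p) = j} E_R(R/\mathfrak p).
\]
Applying $\Gamma_{\mathfrak m}$ kills every summand except the one at $\mathfrak m$, which occurs precisely in degree $d = h(\mathfrak m)$; hence $H^d_{\mathfrak m}(K) \simeq E$ as an abstract isomorphism and $H^j_{\mathfrak m}(K) = 0$ for $j \neq d$.

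I would then proceed by induction on $d = \dim R/I$, the base $d = 0$ being immediate. In the inductive step, hypothesis (ii) forces $K_{\mathfrak p} \neq 0$ for every $\mathfrak p \in V(I)$ (else the $1$-dimensional Ext would vanish), which forces $\height IR_{\mathfrak p} = c$ and, by catenarity in the Gorenstein ring, $\dim R_{\mathfrak p}/IR_{\mathfrak p} = h(\mathfrak p)$. For $\mathfrak p \neq \mathfrak m$, $h(\mathfrak p) < d$; hypothesis (ii) localizes to $R_{\mathfrak p}$, and the inductive hypothesis gives $H^i_{IR_{\mathfrak p}}(R_{\mathfrak p}) = 0$ for $i \neq c$. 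Hence $\Supp H^i_I(R) \subseteq \{\mathfrak m\}$ for $i \neq c$, and by Proposition \ref{2.7} one obtains $H^i_{\mathfrak m}(C^{\cdot}_R(I)) \simeq H^i(C^{\cdot}_R(I)) \simeq H^i_I(R)$ for $c < i \leq n$. Combined with the isomorphisms $H^{i-c}_{\mathfrak m}(K) \simeq H^{i-1}_{\mathfrak m}(C^{\cdot}_R(I))$ of Lemma \ref{3.2} (for $i < n$) and the vanishings $H^j_{\mathfrak m}(K) = 0$ for $j < d$, one gets $H^i_I(R) = 0$ for $c < i < n - 1$.

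The remaining two degrees are handled by the Lemma \ref{3.2} exact sequence
\[
0 \to H^{n-1}_I(R) \to H^d_{\mathfrak m}(K) \to E \to H^n_I(R) \to 0.
\]
Using the abstract iso $H^d_{\mathfrak m}(K) \simeq E$, the middle map becomes an endomorphism $\phi \in \operatorname{End}_R(E) \simeq \hat R$, and it suffices to show $\phi$ is a unit of $\hat R$. The commutative diagram from the proof of Theorem \ref{4.1} (ii) $\Leftrightarrow$ (v) identifies $\Hom_R(k, \phi)$ with the natural map $\Ext^d_R(k, K) \to k$ of Lemma \ref{3.6}, so $\phi$ being a unit is equivalent to this Ext map being nonzero. \textbf{The main obstacle} is precisely exhibiting this nonzeroness \emph{without} invoking the open Conjecture \ref{3.7}, and it is here that the theory of Matlis duals of local cohomology modules from \cite{mH} enters: Matlis-dualizing the above four-term sequence relates $\phi^{\vee}: \hat R \to \hat R$ to the $\hat R^I$-structure provided by Lemma \ref{3.4} and Corollary \ref{3.5}, and in combination with the $1$-dimensionality in (ii) this forces $\phi \not\in \hat{\mathfrak m}$, giving simultaneously $H^{n-1}_I(R) = 0$ and $H^n_I(R) = 0$.
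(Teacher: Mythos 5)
Your setup reproduces the paper's strategy faithfully up to the last step: the implication (i) $\Rightarrow$ (ii) via Theorem \ref{4.1}, the observation that hypothesis (ii) forces the minimal injective resolution of $K=H^c_I(R)$ to have the form $I^j=\bigoplus_{h(\mathfrak p)=j}E_R(R/\mathfrak p)$, the induction on $d$ with localization to kill $H^i_I(R)$ off $\{\mathfrak m\}$ for $i\neq c$, and the reduction via Lemma \ref{3.2} to the two degrees $i=n-1,n$ governed by the four-term sequence and an endomorphism $\phi$ (the paper's $x\in\hat R$). All of that matches.

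The gap is in the final, and decisive, step. You correctly identify that everything hinges on showing $\phi\notin\hat{\mathfrak m}$, but the route you propose does not close it. You suggest ``Matlis-dualizing the four-term sequence relates $\phi^\vee$ to the $\hat R^I$-structure provided by Lemma \ref{3.4} and Corollary \ref{3.5}.'' However, Corollary \ref{3.5} has as its \emph{hypothesis} precisely that $H^i_I(R)=0$ for all $i\neq c$ — i.e.\ statement (i), the thing to be proved — so invoking it here is circular; and Lemma \ref{3.4} by itself, without the extra information of Corollary \ref{3.5}, only gives a four-term exact sequence involving $\hat R^I$, $\Ext^c_R(K,R)$ and the unknown terms $\Ext^0_R(C^\cdot_R(I),R)$, $\Ext^1_R(C^\cdot_R(I),R)$, which does not by itself force $\phi$ to be a unit. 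Nothing in the ``$1$-dimensionality'' of (ii) directly rules out $\phi\in\hat{\mathfrak m}$: the Bass numbers see only $\Hom_R(k,\phi)$, and that map being zero is exactly the failure mode one must exclude. What the paper actually does at this point is quite different in substance: it passes to $D(H^{n-1}_I(R))\simeq R/xR$ and $D(H^n_I(R))\simeq 0:_Rx$, splits $\Ass R$ into the sets $U=\{\mathfrak p:\dim R/(I+\mathfrak p)=0\}$ and $V=\{\mathfrak p:\dim R/(I+\mathfrak p)>0\}$, shows both are nonempty (using Hartshorne–Lichtenbaum for $U$, and the fact $d>1$ plus an argument about $\Ass R/xR$ for $V$), forms the ideals $\mathfrak a,\mathfrak b$ from the corresponding primary components of $(0)$, and compares associated primes across the sequence $0\to R\to R/\mathfrak a\oplus R/\mathfrak b\to R/(\mathfrak a+\mathfrak b)\to 0$ to derive a contradiction (via Hartshorne–Lichtenbaum again) from the assumption that $x$ is a nonunit. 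This prime-by-prime argument, and the separate $d=1$ base case handled by a direct spectral-sequence computation in the paper, are the genuine content missing from your sketch.
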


\begin{proof}
The implication (i) $\Rightarrow$ (ii) is a consequence of the
previous Theorem \ref{4.1}. In order to prove the reverse
implication (ii) $\Rightarrow$ (i) we proceed by induction on $d =
\dim R/I.$ Because $R$ is a Gorenstein ring  the claim is obviously
true for $d = 0.$ For the next let $d = 1,$ i.e. $n = c + 1.$ Then
$\Supp_R H^n_I(R) \subset \{\mathfrak m\}$ and we have to show
$H^n_I(R) = 0.$ There is the following spectral sequence
\[
E^{p,q}_2=\Ext^p_R(R/\mathfrak m,H^q_I(R))\Rightarrow  E^{p+q}_{\infty} = \Ext^{p+q}_R(R/\mathfrak m,R)
\]
and, as a part of it, the boundary homomorphism
\[
E^{0,n}_2 = \Hom_R(R/\mathfrak m,H^n_I(R)) \stackrel{d}{\to}
E^{2,c}_2 = \Ext^2_R(R/\mathfrak m,H^c_I(R)) = 0.
\]
Moreover there is the isomorphism $\Ext^1_R(R/\mathfrak m, H^c_I(R)) = E^{1,c}_2 \simeq E^{1,c}_{\infty}.$ Recall that all rows except those with $q = c,c+1$ are zero. By the hypothesis this is a one-dimensional
$k$-vector space. Moreover $E^n_{\infty} = \Ext^n_R(k, R)$ is also one-dimensional and therefore $E^{0,n}_{\infty}$ has to be zero. But
\[
 0 = E^{0,n}_{\infty} \simeq E^{0,n}_3 = \Ker d.
\]
and $d$ is injective. But this implies that $H^n_I(R) = 0,$ as required.

Now let $d = \dim R/I > 1.$ By the inductive hypothesis and because
of $c = \height IR_{\mathfrak p}$ for all $\mathfrak p \in V(I)$
(cf. Corollary \ref{3.9}) it follows that $\Supp_R H^i_I(R) \subset
\{\mathfrak m\}$ for all $i \not= c.$ Therefore $H^i_{\mathfrak
m}(C^{\cdot}_R(I)) \simeq H^i_I(R)$ for all $c\leq i \leq n$ and zero
elsewhere (cf. Proposition \ref{2.7}). By virtue of Lemma \ref{3.2}
the assumption implies that $H^i_I(R) = 0$ for all $i \not= n-1, n$
and $i \not= c.$ So it remains to show that $H^i_I(R) = 0$ for $i =
n-1, n.$ Without loss of generality we may assume that $R$ is
complete since $R \to \hat{R}$ is a faithful flat extension and
commutes with local cohomology.

As above let $R\qism E^{\cdot}$ denote a minimal injective resolution of
$R.$ Then define the complex $E^{\cdot}_1 = \Gamma_I(E^{\cdot})[-c].$ By the
previous observation it is up to cohomological degrees $d, d-1$ an
injective resolution of $H^c_I(R).$ Moreover let $E^{\cdot}_2$ denote a
minimal injective resolution of $H^c_I(R).$ The assumption on the
Bass numbers in (ii) provides that
\[
 E^i_2 = \oplus_{\mathfrak p \in V(I), h(\mathfrak p) = i} E_R(R/\mathfrak p),
\]
where $E_R(R/\mathfrak p)$ denotes the injective hull of $R/\mathfrak p.$ Therefore there is
a comparison map of complexes $\phi^{\cdot}: E^{\cdot}_2 \to E^{\cdot}_1$ such that $\phi^i$ is the
identity for all $i \not= d.$ Moreover $E^d_2 = E^d_1 = E(k),$ the injective hull of the residue field. By Matlis duality it follows that the endomorphism $\phi^d$ is given by the multiplication with
a certain element $x\in R.$ It is easily seen that $\phi^{\cdot}$ induces in homological degree $d-1$ and $d$ resp. the following isomorphisms
\[
 H^{n-1}_I(R) \simeq 0:_E x \text{ and } H^n_I(R) \simeq E/xE.
\]
Still we have to show that both vanish. To this end let $D(\cdot) = \Hom_R(\cdot, E)$ denote the
Matlis functor. Then
\[
 D(H^{n-1}_I(R)) \simeq R/xR \text{ and } D(H^n_I(R)) \simeq 0 :_R x.
\]
The associated prime ideals of $R$ are all of dimension $n.$ We split them into two disjoint subsets
\[
 U = \{\mathfrak p \in \Ass R  | \dim R/(I + \mathfrak p) = 0\} \text{ and }
 V = \{\mathfrak p \in \Ass R  | \dim R/(I + \mathfrak p) > 0\}.
\]
First of all $V$ can not be empty. Since otherwise $U = \Ass R$ and $I$ is an $\mathfrak m$-primary ideal in contradiction to $d = \dim R/I > 1.$ Second we claim that $U$ can not be empty. Otherwise it follows by the Hartshorne-Lichtenbaum Theorem (cf. Proposition \ref{2.8}) that $H^n_I(R) = 0.$ Therefore the multiplication by $x$ on $R$ is injective and
\[
 \Ass_R D(H^{n-1}_I(R)) = \Ass R/xR.
\]
In case $xR$ is a proper ideal it implies that $\mathfrak p$ is minimal and of height one for any $\mathfrak p \in \Ass R/xR.$ Therefore
\[
 0 \not= \Hom_R(R/\mathfrak p, D(H^{n-1}_I(R)) \simeq D(H^{n-1}_I(R/\mathfrak p))
\]
for all $\mathfrak p \in \Ass_R R/xR.$ Whence $\dim R/(\mathfrak p + I) = 0$ for all $\mathfrak p \in \Ass_R R/xR$ by the Hartshorne-Lichtenbaum theorem. Because of $\Rad (I + xR) =\mathfrak m$ this implies $d = \dim R/I \leq 1,$ a contradiction.

Now define $\mathfrak a$ and $\mathfrak b$ resp. the intersections of all the primary components of the zero ideal of $R$ where the corresponding primes belong to $U$ and $V$ resp. Because both, $U$ and $V$ are non-empty, $\mathfrak a$ as well as $\mathfrak b$ is a proper ideal in $R$ and $0 = \mathfrak a \cap \mathfrak b.$  The natural short exact sequence
\[
 0 \to R \to R/\mathfrak a \oplus R/\mathfrak b \to R/(\mathfrak a + \mathfrak b) \to 0
\]
induces an exact sequence
\[
 \begin{gathered}
  D(H^{n-1}_I(R/(\mathfrak a + \mathfrak b)) \to D(H^{n-1}_I(R/\mathfrak a))) \oplus D(H^{n-1}_I(R/\mathfrak b)) \to \\
D(H^{n-1}_I(R)) \to D(H^{n-2}_I(R/(\mathfrak a + \mathfrak b))).
 \end{gathered}
\]
In the following we want to compare the maximal members of the supports of the modules in this exact sequence. Because of $\Rad( \mathfrak a + I) = \mathfrak m$ as follows by the definition of $\mathfrak a$ there are the following isomorphisms
\[
\begin{aligned}
 D(H^i_I(R/(\mathfrak a + \mathfrak b)) & \simeq   D(H^i_{\mathfrak m}(R/(\mathfrak a + \mathfrak b)), \\
 D(H^i_I(R/\mathfrak a)) & \simeq  D(H^i_{\mathfrak m}(R/\mathfrak a))
\end{aligned}
\]
for all $i \in \mathbb Z.$ For the finitely generated $R$-modules on the right hand side, the so-called modules of deficiency,
there are estimates of their dimension. In particular
\[
 \dim D(H^i_{\mathfrak m}(R/(\mathfrak a + \mathfrak b))) \leq n -1, i = n-1, n-2, \text{ and }
\dim D(H^{n-1}_{\mathfrak m}(R/\mathfrak a)) \leq n - 1
\]
because $\height (\mathfrak a + \mathfrak b) \geq 1$ (and every
$R/(\mathfrak a+\mathfrak b)$-module has dimension at most $n-1$). These estimates in accordance
with the above short exact sequence provide the following equality
about associated prime ideals
\[
 \Ass R \cap \Ass D(H^{n-1}_I(R)) = \Ass R \cap \Ass D(H^{n-1}_I(R/\mathfrak b)).
\]
In case of  $H^n_I(R) \not= 0$ there is a prime  $\mathfrak q \in \Ass D(H^n_I(R)) = \Ass (0:_R x).$ Therefore
$\dim R/\mathfrak q = n$ and $x \in \mathfrak q.$ This implies that $\mathfrak q \in \Ass R/xR = \Ass D(H^{n-1}_I(R))$ and $\mathfrak q \in \Ass D(H^{n-1}_I(R/\mathfrak b))$ by the previous equality. In particular $\mathfrak b \subset \mathfrak q$ since $D(H^{n-1}_I(R/\mathfrak b))$ is annihilated by
$\mathfrak b.$ Therefore $\mathfrak q \in V$ but this is in contradiction to $\mathfrak p \in \Ass D(H^n_I(R)),$ which means $\mathfrak q \in U$ by the Hartshorne-Lichtenbaum vanishing theorem.
We can solve this controversy only in case $x$ is a unit. But this proves the vanishing of both of the local cohomology modules $H^i_I(R), i = n, n-1,$ as required.
\end{proof}

As an application we are able to prove Theorem \ref{0.1} of the Introduction.

\begin{proof} {\bf Theorem \ref{0.1}:} Because $I$ is a complete intersection in $V(I) \setminus \{\mathfrak m\}$ it follows that $IR_{\mathfrak m}$ is generated by $c$ elements in $R_{\mathfrak p}$ for all $V(I) \setminus \{\mathfrak m\}$. That means the conditions (ii), (ii), and (iv) hold for any localization with respect to $\mathfrak p \in V(I) \setminus \{\mathfrak m\}.$ Therefore the equivalence of the conditions (i), (ii), and (iii) follows by virtue of Theorem \ref{4.1}. While the equivalence of (i) and (iv) is a particular case of Theorem \ref{4.2}.

Moreover, if $I$ satisfies one of the equivalent conditions the
conclusion about $H^i_I(R)$ are shown in Corollary \ref{3.8}.
\end{proof}

\section{Examples and Remarks}
Let us discuss the necessity of the local conditions in Theorem \ref{4.1}. By the results of Bass (cf.
\cite{hB}) a local ring is a Gorenstein ring if and only if $\dim_k \Ext^i_R(k, R) = \delta_{n,i},
n = \dim R.$ Moreover, let $(R,\mathfrak m)$ be a Gorenstein ring. Then $R_{\mathfrak p}, \mathfrak p
\in \Spec R,$ is also a Gorenstein ring, i.e. the Gorenstein property localizes. The following example
shows that the property
\[
\dim_k \Ext^i_R(k, H^c_I(R)) = \delta_{d,i}, \quad d = \dim R/I,
\]
does not localize to the corresponding statement for $H^c_{I R_{\mathfrak p}}(R_{\mathfrak p}),
\mathfrak p \in V(I).$

\begin{example} \label{5.1} Let $k$ be an arbitrary field. Let $R = k[|x_0,x_1,x_2,x_3,x_4|]$ denote the
formal power series ring in five variables over $k.$ Let $I =
(x_0,x_1)\cap (x_1,x_2)\cap (x_2,x_3)\cap (x_3,x_4).$ Then $c =
\height I = 2$ and $\dim_k \Ext^i_R(k, H^2_I(R)) = \delta_{3,i}, i
\in \mathbb N.$ Moreover $H^i_I(R) \not= 0$ for all $i \not= 2,3.$
\end{example}

\begin{proof} Obviously we have $c =  2.$ Put
\[
I_1 = (x_0,x_1)\cap (x_1,x_2), \; I_2 = (x_2,x_3)\cap (x_3,x_4), \; J = I_1 + I_2.
\]
By the aid of the Mayer-Vietoris sequence with respect to $I_1$ and $I_2$ it follows that
\[
H^i_I(R) = 0 \text{ for } i \not= 2,3 \text{ and } H^3_I(R) \simeq H^4_J(R).
\]
Moreover $J = J_1 \cap J_2$ with $J_1 = (x_0,x_1,x_3,x_4)$ and $J_2 = (x_1,x_2,x_3),$ as it is easily seen.
A second use of the Mayer-Vietoris sequence with respect to $J_1$ and $J_2$ gives a short exact
sequence
\[
0 \to H^4_{J_1}(R) \to H^4_J(R) \to E \to 0.
\]
Recall that $J_1 + J_2 = \mathfrak m$ and $H^5_{\mathfrak m}(R) \simeq E,$ where $\mathfrak m$ denotes
the maximal ideal of $R$ and $E = E_R(R/\mathfrak m).$ Because of $H^i_{J_1}(R) = 0$ for all
$i \not = 4$ the truncation process provides a short exact sequence
\[
0 \to H^4_{J_1}(R) \to E_R(R/J_1) \to E \to 0.
\]
Localizing both exact sequences at $x_2$ implies the following isomorphisms
\[
H^4_J(R)_{x_2} \simeq H^4_{J_1}(R)_{x_2} \simeq E_R(R/J_1).
\]
Recall that $x_2$ acts bijectively on $E_R(R/J_1).$ Moreover there is the naturally defined
exact sequence
\[
0 \to H^0_{x_2}(H^4_J(R)) \to H^4_J(R) \stackrel{f}{\to} H^4_J(R)_{x_2} \to H^1_{x_2}(H^4_J(R)) \to 0.
\]
In the next step we show that $f$ is an isomorphism. Therefore we have to show that
$H^i_{x_2}(H^4_J(R)) = 0$ for $i = 0,1.$ To this end consider the short exact sequence
\[
0 \to H^1_{x_2}(H^{i-1}_J(R)) \to H^i_{x_2R + J}(R) \to H^0_{x_2}(H^i_J(R)) \to 0
\]
(cf. \cite[Corollary 1.4]{pS3}). Because of the equality $\Rad (x_2R + J) = (x_1,x_2,x_3)R$ it follows that
$H^i_{x_2R + J}(R) = 0$ for all $i \not= 3.$ With this in mind the previous short exact sequence
implies that $f$ is an isomorphism. This means that $H^3_I(R) \simeq H^4_J(R) \simeq E_R(R/J_1)$
and therefore $H^3_I(R)$ is an injective $R$-module.

Finally consider the spectral sequence
\[
E_2^{p,q} = \Ext^p_R(k, H^q_I(R)) \Longrightarrow E^{p+q}_{\infty} =
\Ext_R^{p+q}(k,R).
\]
Because of $\Ext^p_R(k, H^3_I(R)) = 0$ for all $p \in \mathbb Z$ it degenerates to
isomorphisms
\[
\Ext^p_R(k, H^2_I(R)) \simeq \Ext^{p+2}_R(k, R) \text{ for all } p \in \mathbb Z.
\]
Because $R$ is a Gorenstein ring it follows that $\dim_k \Ext^p_R(k, H^2_I(R)) = \delta_{3,p}$
as required.
\end{proof}

Another problem related to our considerations is the finiteness of the Bass numbers of
$H^c_I(R).$ Recall that the Bass numbers of a finitely generated $R$-module are always
finite (cf. \cite{hB}). This is not the case for the Bass numbers of $H^c_I(R), c = \height I.$

\begin{example} \label{5.2} Let $k$ denote a field and $R = k[|x,y,u,v|]/(xu-yv),$ where
$k[|x,y,u,v|]$ denotes the power series ring in four variables over
$k.$ Let $I = (u,v)R.$ Then $\dim R = 3, \dim R/I = 2$ and $c = 1.$
It follows that $H^i_I(R) = 0$ for $i \not= 1,2.$ The truncation
complex with the short exact sequence (cf. \ref{2.1})
\[
0 \to H^c_I(R)[-c] \to \gam (E^{\cdot}) \to C^{\cdot}_R(I) \to 0
\]
induces an injection
\[
0 \to \Hom_R(k, H^2_I(R)) \to \Ext^2_R(k, H^1_I(R)).
\]
Hartshorne (cf. \cite[{\S}3]{rH2}) has shown that the socle of
$H^2_I(R)$ is not a finite dimensional $k$-vector space. Therefore,
the second Bass number of $H^1_I(R)$ is infinite.
\end{example}

As mentioned at the beginning a set-theoretic complete intersection
is a cohomologically complete intersection. The converse is not
true. Let $\mathfrak p \subset k[x_0,x_1,x_2,x_3]$ a homogeneous
prime ideal of dimension two. Then $\mathfrak p$ in $R =
k[x_0,x_1,x_2,x_3]_{(x_0,x_1,x_2,x_3)}$ is always a cohomologically
complete intersection because $H^3_{\mathfrak p}(R) = 0$ by
\cite[Theorem 7.5]{rH3}. In the following we will remark that the property
of being a set-theoretic complete intersection is -- by virtue of
Theorems \ref{4.1} and \ref{4.2} -- also completely
encoded in the local cohomology $H^c_I(R), c = \height I.$

The following result is a particular case of \cite[section 0]{mH2} or, with more details,
in \cite[1.1.4]{mH}. For the sake of completeness we include a proof.

\begin{lemma} \label{5.3} Let $(R, \mathfrak m)$ denote a local
ring. Let $I \subset R$ denote an ideal. Let $f_1,\ldots,f_c, c = \height I,$ be a regular
sequence contained in $I.$ Then the following conditions are equivalent:
\begin{itemize}
\item[(i)] $\Rad I = \Rad (f_1,\ldots, f_c)R.$
\item[(ii)] $I$ is a cohomologically complete intersection and $f_1,\ldots ,f_c$ is a regular
sequence on $\Hom_R(H^c_I(R), E).$
\end{itemize}
\end{lemma}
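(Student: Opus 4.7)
I will prove both directions by induction on $c = \height I$, reducing modulo $f_1$ to $\bar R := R/f_1 R$ with $\bar I := I/f_1 R$ and the shorter regular sequence $\bar f_2, \ldots, \bar f_c$. The key identification to be established is
\[
 M/f_1 M \simeq \Hom_{\bar R}(H^{c-1}_{\bar I}(\bar R), E_{\bar R}(\bar k)),
\]
where $M := \Hom_R(H^c_I(R), E)$. This comes from two inputs: first, multiplication by $f_1$ is surjective on $H^c_I(R)$ (under either hypothesis, by different arguments); applying the exact functor $\Hom_R(-, E)$ to $0 \to (0:_{H^c_I(R)} f_1) \to H^c_I(R) \xrightarrow{f_1} H^c_I(R) \to 0$ then shows that $f_1$ is $M$-regular and that $M/f_1 M \simeq \Hom_R((0:_{H^c_I(R)} f_1), E)$. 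Second, the long exact sequence from $0 \to R \xrightarrow{f_1} R \to R/f_1 R \to 0$ together with the vanishing of $H^i_I(R)$ for $i \neq c$ identifies $(0:_{H^c_I(R)} f_1)$ with $H^{c-1}_{\bar I}(\bar R)$; since this module is $f_1$-torsion and $E_{\bar R}(\bar k) \simeq (0:_E f_1)$, the $R$-dual agrees with the $\bar R$-dual, giving the displayed isomorphism.

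\textbf{(i) $\Rightarrow$ (ii).} Under $\Rad I = \Rad(\underline f)$, $H^i_I(R) = H^i_{(\underline f)}(R) = 0$ for $i \neq c$ by the Koszul complex of the regular sequence $\underline f$, so $I$ is a cohomologically complete intersection. Surjectivity of $f_1$ on $H^c_{(\underline f)}(R) = R_{f_1\cdots f_c}/\sum_j R_{f_1\cdots\hat f_j \cdots f_c}$ is an immediate \v Cech calculation: $[r/(f_1\cdots f_c)^n] = f_1 \cdot [r f_2\cdots f_c/(f_1\cdots f_c)^{n+1}]$. The key identification then presents $M/f_1 M$ as the Matlis dual in $\bar R$ of the top local cohomology of the regular sequence $\bar f_2, \ldots, \bar f_c$, whose radical equals $\Rad \bar I$; the inductive hypothesis applied in $\bar R$ gives that $\bar f_2, \ldots, \bar f_c$ is $M/f_1 M$-regular, hence $\underline f$ is $M$-regular.

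\textbf{(ii) $\Rightarrow$ (i).} Regularity of $f_1$ on $M$ is equivalent to $\Hom_R(H^c_I(R)/f_1 H^c_I(R), E) = 0$. The crucial input here is that $E = E_R(R/\mathfrak m)$ cogenerates the full category of $R$-modules: any nonzero $n$ lies in a cyclic submodule $R/\Ann n$ surjecting onto $R/\mathfrak m \hookrightarrow E$, and this lifts to the ambient module by injectivity. Hence $f_1 H^c_I(R) = H^c_I(R)$. The long exact sequence then gives $H^i_{\bar I}(\bar R) = 0$ for $i \neq c-1$, so $\bar I$ is a cohomologically complete intersection of height $c-1$ in $\bar R$ containing the regular sequence $\bar f_2, \ldots, \bar f_c$; via the key identification, $\bar f_2, \ldots, \bar f_c$ is regular on $\Hom_{\bar R}(H^{c-1}_{\bar I}(\bar R), E_{\bar R}(\bar k)) \simeq M/f_1 M$, so the inductive hypothesis delivers $\Rad \bar I = \Rad(\bar f_2, \ldots, \bar f_c)$, whence $\Rad I = \Rad(\underline f)$ in $R$. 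The base case $c = 0$ (empty $\underline f$) becomes: $H^i_I(R) = 0$ for $i > 0$ implies $I$ nilpotent; passing to $\bar R := R/\Gamma_I(R)$ one has $H^i_I(\bar R) = 0$ for all $i$, and if $\bar R \neq 0$ then $I \bar R$ is proper and an associated prime at some point of $V(I) \cap \Supp \bar R$ produces a nonzero element of $H^0_I(\bar R)$, a contradiction. The main obstacle is the cogenerator step, since $H^c_I(R)/f_1 H^c_I(R)$ need not be Artinian or $\mathfrak m$-torsion; the cyclic-submodule argument is what makes the Matlis-dual formalism operational outside the complete/Artinian setting.
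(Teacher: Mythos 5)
Your proof follows the same route as the paper's (which is only sketched there): reduce modulo $f_1$, use the four-term exact sequence
\[
0 \to H^{c-1}_I(R/f_1 R) \to H^c_I(R) \xrightarrow{\ f_1\ } H^c_I(R) \to H^c_I(R/f_1 R) \to 0,
\]
observe that surjectivity of $f_1$ on $H^c_I(R)$ is equivalent via Matlis duality to regularity of $f_1$ on $M = \Hom_R(H^c_I(R),E)$, and induct on $c$. You supply several details the paper omits --- in particular the explicit identification $M/f_1 M \simeq \Hom_{R/f_1R}(H^{c-1}_{I/f_1R}(R/f_1R), E_{R/f_1R})$ and the explicit \v Cech computation showing $f_1$ is surjective on the top Koszul cohomology --- and these are correct.

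There is, however, a genuine flaw in your base case $c = 0$. After passing to $\bar R = R/\Gamma_I(R)$ you have arranged precisely that $H^0_I(\bar R) = \Gamma_I(\bar R) = 0$, so the claim that a prime ``produces a nonzero element of $H^0_I(\bar R)$'' cannot be right: no associated prime of $\bar R$ contains $I$. The correct contradiction comes from \emph{higher} local cohomology. Since no $\mathfrak q \in \Ass \bar R$ contains $I$, any prime $\mathfrak p$ minimal over $I\bar R$ has $\height_{\bar R}\mathfrak p = h \geq 1$; then $I\bar R_{\mathfrak p}$ is $\mathfrak p\bar R_{\mathfrak p}$-primary and Grothendieck's nonvanishing theorem gives $H^h_{I\bar R_{\mathfrak p}}(\bar R_{\mathfrak p}) \neq 0$, so $H^h_I(\bar R) \neq 0$. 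Combined with $H^i_I(\bar R) \simeq H^i_I(R)$ for $i \geq 1$ (from the exact sequence of $0 \to \Gamma_I(R) \to R \to \bar R \to 0$ and $I$-torsionness of $\Gamma_I(R)$), this contradicts $\cd I = 0$ with some $h \geq 1$, not $h = 0$. With this repair the base case and hence the whole argument stands.
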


\begin{proof} We show (ii) $\Longrightarrow$ (i). It is easy to see that $\cd I = 0$ if
and only if $\ara I = 0.$ Now let $f \in I$ denote an $R$-regular element. Then the multiplication
map by $f$ induces an exact sequence
\[
0 \to H^{c-1}_I(R/fR) \to H^c_I(R) \stackrel{f}{\to} H^c_I(R) \to H^c_I(R/fR) \to 0
\]
and the vanishing $H^i_I(R/fR) = 0$ for all $i \not= c-1, c.$ Then $\cd I (R/fR) = c-1$
if and only if $f$ is regular on $\Hom_R(H^c_I(R), E).$ So an induction on $c$ proves the claim.

The converse  (i) $\Longrightarrow$ (ii) follows by a similar consideration.
\end{proof}

Because of the previuos arguments it would be of some intererst to understand the
structure of $H^c_I(R), c = \height I,$ in a better way.

\begin{example} \label{5.4} Let $R = k[x_0,x_1,x_2,x_3]_{(x_0,x_1,x_2,x_3)}$ and let $\mathfrak p$
be the defining ideal of the rational quartic given parametrically by $(s^4,s^3t,st^3,t^4)$ in
$\mathbb P^3_k.$ Therefore
\[
\mathfrak p = (x_0x_3-x_1x_2, x_1^3-x_0^2x_2, x_1^2x_3-x_0x_2^2, x_1x_3^2-x_2^3).
\]
By the above remark $\cd \mathfrak p = 2,$ so that $\mathfrak p$ is
cohomologically a complete intersection. Let $k$ a field of positive
characteristic $p.$ Let $n, r, s \in \mathbb N$ such that $p^n = 3r
+ 4s.$ Then it was shown (cf. \cite[II.2.(ii)]{RS}) that $\mathfrak
p = \Rad (F,G),$ where
\[
F = x_1^3-x_0^2x_2, G = (x_2^4-x_0x_3^3)^{p^n} + 3x_0^rx_1^sx_2^{4r+2s}x_3^{p^n}(x_0^rx_1^sx_3^{p^n}-x_2^{4r+2s}).
\]
Therefore $F, G$ is a regular sequence on $\Hom_R(H^2_{\mathfrak p}(R) , E).$ It is an open problem
whether there is such a regular sequence in the case of $\text{ char } k = 0.$
\end{example}

Related to the characterization of a Gorenstein ring and the results in Theorem \ref{4.2} 
there is the following problem concerning the Bass numbers. 

\begin{problem} \label{5.5}
 A Gorenstein ring $(R,\mathfrak m)$ is a Cohen-Macaulay ring of type 1. That means the following statement about the Bass numbers. Suppose that
$\dim_k \Ext^i_R(k, R) = \delta_{d,i}$ for all $i \leq d = \dim R.$
Then $\dim_k \Ext^i_R(k, R) = \delta_{d,i}$ for all $i \in \mathbb
Z.$ We do not know whether it will be sufficient to replace the
condition (ii) in Theorem \ref{4.2} by
\[
\dim_{k(\mathfrak p)} \Ext^i_{R_{\mathfrak p}}(k(\mathfrak p),
H^c_{IR_{\mathfrak p}}(R_{\mathfrak p})) = \delta_{h(\mathfrak p),i}
\text{ for all } i \leq h(\mathfrak p).
\]
\end{problem}

\end{document}